\newtheorem{thm}{Theorem}[section]
\newtheorem{propo}[thm]{Proposition}
\newtheorem{prop}[thm]{Properties}
\theoremstyle{definition}
\theoremstyle{remark}
\numberwithin{equation}{section}
\newtheorem{ex}[thm]{Example}
\newsavebox{\measure@tikzpicture}
  \def\tikz@width{#1}%
  \def\tikzscale{1}\begin{lrbox}{\measure@tikzpicture}%
  \edef\tikzscale{\pgfmathresult}%
\def\R{\mathbb{R}}
\def\Q{\mathbb{Q}}
\def\Z{\mathbb{Z}}
\def\B{\mathbb{B}}
\def\N{\mathcal{N}}
\def\D{\mathcal{D}}
\def\X{\mathcal{X}}
\newcommand{\dsum}{\displaystyle\sum}
\newcommand{\dmin}{\displaystyle\min}
\newcommand{\dprod}{\displaystyle\prod}
\pgfplotsset{compat=newest}
\let\origmaketitle\maketitle
\def\maketitle{
  \begingroup
  \def\uppercasenonmath##1{} 
  \let\MakeUppercase\relax 
  \origmaketitle
  \endgroup
}
\begin{document}

\title{Ordered $p$-median problems with neighbourhoods}%

\author[V. Blanco]{V\'ictor Blanco\\Dpt. Quant. Methods for Economics \& Business, Universidad de Granada\\vblanco@ugr.es}
\email{vblanco@ugr.es}

\address{Dpt. Quant. Methods for Economics \& Business, Universidad de Granada}

\date{\today}

\begin{abstract}
In this paper, we introduce a new variant of the $p$-median facility location problem in which it is assumed that the exact location of the potential facilities is unknown. Instead, each of the facilities must be located in a region around their initially assigned location (the neighborhood). In this problem, two main decisions have to be made simultaneously: the determination of the potential facilities that must be open to serve the demands of the customers and the location of the open facilities in their neighborhoods, at global minimum cost. We present several mixed integer non-linear programming formulations for a wide family of objective functions which are common in Location Analysis: ordered median functions. We also develop two math-heuristic approaches for solving the problem. We report the results of extensive computational experiments.
\end{abstract}

\keywords{Discrete facility location; Second Order Cone Programming ; Neighborhoods; Ordered Median.}
\subjclass[2010]{90B85, 90C22, 90C30, 47A30.}

\maketitle

\section{Introduction}

Facility location concerns with the optimal placement of one or several new facilities/plants to satisfy the customers' demands. In discrete facility location problems, the positions of both the customers and the potential new facilities are part of the input as well as the travel costs between them. On the other hand, in continuous facility location problems, although the (geographical) coordinates (in a given $d$-dimensional space) of the customers are provided, the information about the potential location of the facilities is unknown, in the sense that the facilities can be located at any place of the given space. Both the discrete and the continuous versions of facility location problems have been widely studied in the literature (see the monographs \cite{DH_2002,NP_2005} and the references therein). Several versions of these facility location problems have been analyzed, by considering different objective functions \cite{RNPF_MMOR00}, by fixing either the number of facilities to be located (as in the $p$-median or $p$-center problems) \cite{H_OR64} or maximum capacities for the facilities (capacitated facility location)\cite{KH_MS63,P_ORP08}, or assuming uncertainty in the demands of the customers (see \cite{AFS_O11,CNS_O17,CS_2015} for a recent review), amongst many others. 

In this paper, we propose a unified framework for facility location problems in which the underlying problem is a discrete facility location problem. However, because of locational imprecision or unaccuracy, the new facilities are allowed to be located not only in the exact location of the potential facilities, but in certain regions around each of them, the \textit{neighborhoods}. In case the initial placements of the potential facilities are exact enough, that is, their neighborhoods are singletons (with a single element which coincides with the initial placement of the potential facilities), the problem becomes the discrete location version of the problem. On the other hand, if the neighborhoods are large enough, the problem turns into the continuous location version of the problem, allowing the facilities to be located in the entire space. Otherwise, different shapes and sizes for the neighborhoods allow one to model how imprecise the locational information provided is. The goal is, apart from the discrete location decision of the problem (placement  of facilities among the given set and allocations customers-plants), to find the optimal location of the open facilities in the neighborhoods. The main difference between this problem and its underlying discrete facility location problem, is that in the latest, the travel distances between facilities and customers are assumed to be known, while in the neighborhood version of the problem, as in the continuous case, those distances depend on the place where the facility is located in the neighborhood. Hence, in this problem, the matrix of travel costs is not provided, but a distance measure to compute the travel costs between customers and facilities is given. This problem, as far as we know, has not been fully  investigated in Location Analysis, although some attempts have been presented in \cite{Cooper_JRS78} and \cite{Juel_OR81} where sensitivity analyses were performed by allowing the customers to \textit{move} around disc-shaped neighborhoods on the plane. Also, this problem can be seen as a constrained version of the classical multifacility location problem, which have been only partially studied in the literature (see \cite{BPE_EJOR16}). This framework will be called \textit{Facility Location with Neighborhoods}, a terminology borrowed from the neighborhood versions of the Minimum Spanning Tree problem \cite{BFP_ARXIV16,DFHKKLS_TCS15} and Traveling Salesman problem  \cite{B_JA05,DM_JA03}. 

The importance of analyzing this family of problems comes from its wide range of applications. It is well known that discrete facility location problems are useful in many real-world applications (see \cite{LG_IJOC12,MNS_CORS06}, amongst many others). However, in many situations, as for instance in the design of telecommunication networks, where a set of servers must be located to supply connection to a set of customers, the exact location of a server may not be exactly provided. In contrast, a region where the decision maker wishes to locate each of the facilities (a corridor, a room, or any other bounded space) can be \textit{easily} given. In such a case, a robust worst-case decision would not reflect reality, since the decision maker does not known the location of the facility because a lack of certainty but because it allows locational flexibility to the decision. An optimal design may be obtained if the new facilities are allowed to be located in adequately chosen neighborhoods.

In this paper, we provide suitable mathematical programming formulations for the neighborhood versions of a widely studied family of objective functions in facility location problems: ordered median (OM) functions. In these problems, $p$ facilities are to be located by minimizing a flexible objective function that allows one to model different classical location problems. For instance, OM problems allow modeling location problems in which the customers support the median ($p$-median) or the maximum ($p$-center) travel costs, among many other robust alternatives. 
OM problems were introduced in Location Analysis by Puerto and Fern\'andez \cite{PF_JNCA00} and several papers have analyzed this family of objective functions in facility location: discrete problems \cite{KNPR_TOP10,LPP_COR17,Ponce2016}, continuous problems \cite{BEP_COR13,BEP14}, network/tree location problems~\cite{KNP_NETW03,PT_MP05,Tang_etal16}, hub location problems~\cite{PRR_COR11}, stochastic facility location problems~\cite{Yan_etal17}, multiobjecive location\cite{Grabis_etal12}, etc  (see  \cite{PR_chapter2015} for a recent overview on the recent developments on ordered median location problems). In particular, we analyze the neighborhood version of OM location problems for the so-called monotone case. We study the still general case in which the neighborhoods are second-order cone representable regions. These sets allow one to model as particular cases polyhedral neighborhoods or $\ell_\tau$-norm balls. The distance measure to represent travel costs between customers and facilities are assumed to be $\ell_\nu$-norm based distances. Within this framework we present four different Mixed Integer Second Order Cone Optimization (MISOCO) models.

The current limitations of the on-the-shelf solvers to solve mixed integer nonlinear problems, and the difficulty of solving even the underlying problem (the classical $p$-median problem is NP-hard), makes the resolution of the problem under study a hard challenge. For that reason, we also develop two math-heuristic algorithms based on different location-allocation schemes, which are able to solve larger problems.

Our paper is organized in five sections. In Section \ref{sec:1} we introduce the problem and some general properties are stated. Section \ref{sec:formulations} is devoted to provide four different mixed integer non linear programming formulations of the problem. At the end of the section, we run some computational experiments in order to compare the four formulations. In Section \ref{heuristics} the two math-heuristic approaches are described, and the results of some computational experiments are reported. Finally, some conclusions are presented in Section \ref{sec:conclusions}.

\section{DOMP with Neighborhoods}
\label{sec:1}

In this section we introduce the Ordered Median Problem with Neighborhoods (OMPN) in which the underlying discrete facility location problem is the Discrete Ordered $p$-Median Problem (DOMP).

For the sake of presentation, we first describe the DOMP problem. The input data for the problem is:
\begin{itemize}
\item  $\mathcal{A}=\{a_1, \ldots, a_n\} \subseteq \R^d$: set of coordinates of the customers. We assume, as usual in the location literature, that the coordinates of potential facilities coincides with $\mathcal{A}$.
\item $D = \Big(d(a_i,a_j)\Big)_{i,j=1}^n \in \R^{n\times n}$: Travel cost matrix between facilities.
\item $\lambda_1, \ldots, \lambda_n \geq 0$: Ordered median function weights.
\end{itemize}
 
The goal of DOMP is to select, from the elements of $\mathcal{A}$, a subset of $p$ facilities, $\mathcal{B} \subset \mathcal{A}$ with $|\mathcal{B}|=p$, that minimizes the ordered median objective function:
$$
\dsum_{i=1}^n \lambda_i D_{(i)},
$$
where $D_i= \min_{b\in \mathcal{B}} d(a_i,b)$ (the smallest travel cost to supply customer $i$ from the open facilities), and $D_{(i)}$ represent the $i$-th largest element in the set $\{D_1, \ldots, D_n\}$, i.e. $D_{(i)} \in \{D_1, \ldots, D_n\}$ with $D_{(1)} \geq \cdots \geq D_{(n)}$.  

The DOMP can be stated as the following optimization problem:
\begin{equation}
\min_{\mathcal{B} \subset \mathcal{A}: |\mathcal{B}|=p} \dsum_{i=1}^n \lambda_i D_{(i)}
\label{domp0}\tag{${\rm DOMP}$}
\end{equation}

We will assume that the $\lambda$-weights verify $\lambda_1 \geq \cdots \geq \lambda_n \geq 0$, dealing with the so-called \textit{convex ordered median problem}. Most of the main well-known objective functions in Locational Analysis are part of this family, as for instance:
\begin{itemize}
\item Median ($\lambda=(1,\ldots, 1)$):   $\sum_{i=1}^{n} D_i$.
\item Center ($\lambda=(1,0,\ldots,0)$: $\max_{i=1, \ldots, n} \;D_i$.
\item $K$--Centrum $\lambda=(1,\stackrel{K}{\ldots},1,0,\ldots,0)$: $\sum_{i=1}^{K} D_{(i)}$.
\item Cent-Dian$_{\alpha}$ ($\lambda=(1,1-\alpha, \cdots, 1-\alpha)$):  $\alpha \max_{i=1, \ldots, n} D_{i}+(1-\alpha)\sum_{1\leq i\leq n} D_{i}$, for $0\leq\alpha\leq1$.
\end{itemize}

Ordered median functions are continuous and symmetric (in the sense that they are invariant under permutations). Furthermore, if $\lambda_1 \geq \ldots \geq \lambda_n \geq 0$, ordered median functions are convex, fact that will be exploited throughout this paper. The interested reader is referred to \cite{PR_chapter2015} for a complete description of the properties of ordered median functions. 

A few formulations and exact solution approaches for DOMP have been developed since the problem was introduced. In particular Boland et. al \cite{BDNP_COR06} formulated the problem as a (non convex) quadratic problem with quadratic constraints. A suitable three index (pure) binary programming reformulation with $O(n^3$) variables and $O(n^2)$ linear constraints was provided by linearizing the bilinear terms. A second formulation, reducing to two the indices of the variables in the formulation, was also presented in the same paper by using a different linearization strategy, and that allows reducing the number of binary variables to $O(n^2)$. Puerto \cite{P_ORP08}, Marin et. al \cite{MNV_MMOR10}, Marin et. al \cite{MNPV_DAM09} and Labb\'e et. al \cite{LPP_COR17} provided alternative formulations for the problem with two and three indices, that need, in a preprocessing phase, sorting the elements in the matrix $D$ (and removing duplicates).  All the above mentioned formulations are valid for general ordered median problems. Concerning the convex case, Ogryzack and Tamir \cite{OT_IPL03} presented a different formulation which exploits the monotonicity of the $\lambda$-weights by applying a $k$-sum representation of the ordered median function (see also the recent paper \cite{PRT_MP16} for further details on the powerful of this representation in a wide variety of optimization problems). Finally, in Blanco et. al  \cite{BEP14} the authors derived a formulation that also avoid using the binary variables for sorting the involved distances. Also, a few heuristic approaches are available in the literature for the DOMP problem (see \cite{DNHM_AOR05,PPG_EJOR14,SKD_EJOR07}).

Observe also that in the DOMP, once the travel costs matrix is provided, the locational coordinates of the customers are not needed, and then, the problem does not depend on  the dimension of the space where the customers live.

For the OMPN framework, instead of providing a travel cost matrix between customers, we consider a travel distance measure $d: \R^d\times \R^d \rightarrow \R_+$ induced by a norm $\|\cdot\|$, i.e., $d(a,b) = \|a-b\|$, for $a, b \in \mathcal{A}$.

Also, each potential facility, $a\in \mathcal{A}$, is associated to a convex set, $\N(a) \subset \R^d$, with $a \in \N(a)$, its \textit{neighbourhood}. We denote by $\overline{\N} = \dprod_{a\in\mathcal{A}} \N(a)$, the space of neighborhoods. We also consider in this case, set-up costs for opening facilities (which may be neighborhood-dependent), which we denote by  $f(a)$ for each $a\in \mathcal{A}$.

The goals of the Ordered Median Problem with Neighborhoods are:

\begin{itemize}
\item to find the indices of the $p$ facilities to open: $\mathcal{B}=\{b_1, \ldots, b_p\}$,with $b_j \in \mathcal{A}$ for $j=1, \ldots, p$,
\item to locate the facilities into their neighbourhoods: $\bar b_1, \ldots, \bar b_p$ with $\bar b_j \in \N(b_j)$, $j=1, \ldots, p$, and
\item to allocate customers to their closest open facilities $\bar b_1, \ldots, \bar b_p$,
\end{itemize}
by minimizing an ordered median function of the travel distances plus set-up costs.

Observe that the optimization problem to solve for the OMPN is similar to \eqref{domp0}:

\begin{equation}
\min_{\stackrel{\mathcal{B} \subset \mathcal{A}: |\B|=p}{\bar a \in \N}} C(\mathcal{B}) := \dsum_{i=1}^n \lambda_i D_{(i)} + \dsum_{b \in \mathcal{B}} f(b)\label{dompn0}\tag{${\rm OMPN}$}
\end{equation}
but now, $D_i = \min_{b \in \mathcal{B}} d(a_i, \bar b)$, i.e. the travel distance from a customer to its closest facility depends on the position of the facilities in their neighborhoods. So both the discrete location (open facilities and allocation scheme) and the continuous location decisions (coordinates of the new facilities) are involved in the problem.

We use the classical notation for the variables in $p$-median problem:
$$
x_{ij} = \left\{\begin{array}{cl}
1 & \mbox{if client $i$ is allocated to facility $j$ ($i\neq j$) or if facility $j$ is open ($i=j$),}\\
0 & \mbox{otherwise}
\end{array}\right.
$$
for $i, j=1, \ldots, n$.

Note that, using the above family of variables, the set of open facilities and assignments between customers and $p$ facilitites can be represented by the set $\X = \X_R \cap \{0,1\}^{n\times n}$, where 
\begin{eqnarray*}
\X_R = \Big\{x \in  [0,1]^{n\times n}:  \dsum_{j=1}^n x_{ij} =1,  \forall i=1, \ldots, n, \dsum_{j=1}^n x_{jj}=p, x_{ij} \leq x_{jj}, \forall i, j=1, \ldots, n\Big\}
\end{eqnarray*}
is the so-called \emph{$p$-median polytope}.

Observe also that, the above settings easily extend to the case in which the possible connections between demand points and facilities is induced by a graph.

On the other hand, the set of distances, will be represented by the following set:
\begin{eqnarray*}
\D = \Big\{(d,\bar a) \in \R^{n\times n}_+ \times \overline \N: d_{ij} \geq \|a_i - \bar a_j\|, i, j=1, \ldots, n, i\neq j\Big\},
\end{eqnarray*}
where $d_{ij}$ (when one tries to minimize some aggregating function of the travel-costs) represents the distance between the customer located at $a_i$ and the facility located at $\bar a_j$, for all $i, j=1, \ldots, n$.

Note that the set $\D$ can be \emph{easily} adapted to the case in which each customer uses a different travel distance measure (norm), and the structure of $\D$ remains the same.

With the above notation, the general OMPN can be compactly formulated as:
\begin{align}
\min &\dsum_{i=1}^n \lambda_i z_{(i)} + \dsum_{j=1}^n f_j x_{jj}\label{dompn:0}\\
\mbox{s.t. } & z_i = \dsum_{i=1}^n d_{ij} x_{ij}, i, j=1, \ldots, n, \label{bilinear}\\
& x \in \X, (d,\bar a) \in \D.\label{dompn:D}
\end{align}
where $f_j$ denotes the set-up cost of the facility initially located at $a_j$, $j=1, \ldots, n$ and $z_i$ represents the minimum distance between customer located at $a_i$ and the open facilities.

Observe that \eqref{dompn:0}--\eqref{dompn:D} is a mixed integer non linear programming problem (MINLP), whose continuous relaxation is not convex nor concave due to the bilinear constraint \eqref{bilinear} and probably to the constraints in $\D$. In case the neighborhoods are convex, the set $\D$ is also convex (because of the convexity of the norm). Hence, if the discrete location variables $x$ were known, the problem (also because the convexity of the ordered median function) becomes a continuous convex problem. On the other hand, if the distances were known, the problem becomes a DOMP, so several formulations can be applied to solve the problem. In the OMPN, both $\X$ and $\D$ are part of the final decision. Thus, both the difficulties of handling the DOMP problem and the continuous problem are inherited to the OMPN. In particular, since the $p$-median problem (or the $p$-center problem) is known to be NP-hard \cite{KarivHakimi_SIAMAM79} which is a particular case of OMPN, the OMPN is also NP-hard.

The simplest OMPN problem, apart from the DOMP case (where the neighborhoods can be seen as singletons), is obtained when the set $\D$ is a polyhedron (and then, defined by a set of linear inequalities). Since the geometry of $\D$ depends on the distance measure induced by $\|\cdot\|$ and the shapes of the neighborhoods, $\D$ will be a polyhedron when these two features can be linearly represented. The norms which are polyhedrally-representable are called \textit{block} (or polyhedral) norms (see \cite{NP_2005}) which are characterized by the fact that their unit balls are polytopes, i.e., $P=\{z\in \R^d: \|z\|\leq 1\}$ is a bounded polyhedron. On the other hand, the neighborhoods, because they are assumed to be compact and convex sets, their polyhedral representability is assured if and only if they are also polytopes. In those cases, both the set $\D$ and $\X$ are identified with sets of linear inequalities (and integrality constraints in $\X$). Furthermore, as can be checked in \cite{OO2012} or \cite{PR_chapter2015}, the ordered median function can be also modeled by using a set of linear inequalities and equations and by adding a set of $O(n^2)$ binary variables to our model. The above observations are summarized in the following result.
\begin{thm}
Let $\lambda_1 \geq \cdots \geq \lambda_n \geq 0$, $\|\cdot\|$ a block norm and $\N(a)$ a polyhedron, for each $a \in \mathcal{A}$. Then OMPN can be formulated as a
mixed-integer linear programming problem.
\end{thm}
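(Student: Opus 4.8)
The plan is to start from the compact formulation \eqref{dompn:0}--\eqref{dompn:D}, which is already an exact model of \eqref{dompn0}, and to show that under the three hypotheses (monotone $\lambda$, block norm, polyhedral neighbourhoods) each of its non-linear ingredients can be rewritten with linear (in)equalities and finitely many additional binary variables. There are exactly three such ingredients: the set $\D$ of feasible distance vectors (which hides the norm constraints and the membership $\bar a\in\overline{\N}$), the bilinear coupling \eqref{bilinear}, and the sorted sum $\dsum_{i=1}^n\lambda_i z_{(i)}$ in the objective.

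First I would deal with $\D$. Since each $\N(a_j)$ is a polyhedron, $\bar a_j\in\N(a_j)$ is by definition a finite system of linear inequalities in the coordinates of $\bar a_j$. For the constraints $d_{ij}\geq\|a_i-\bar a_j\|$, I would use that a block norm has a polytopal unit ball $P$, hence its dual unit ball is also a polytope; writing the latter as the convex hull of finitely many vectors $u_1,\dots,u_L$ gives $\|z\|=\dmax_{\ell=1,\dots,L}\langle u_\ell,z\rangle$, so $d_{ij}\geq\|a_i-\bar a_j\|$ is equivalent to the $L$ linear inequalities $\langle u_\ell,a_i-\bar a_j\rangle\leq d_{ij}$, $\ell=1,\dots,L$. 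Consequently $\D$ is a polyhedron described by $O(n^2L)$ linear inequalities, together with the inequalities defining the neighbourhoods.

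Next, the bilinear equations \eqref{bilinear}. Because every $\N(a_j)$ is compact, $M_{ij}:=\dmax_{\bar a_j\in\N(a_j)}\|a_i-\bar a_j\|$ is finite and can be precomputed, and it is a valid upper bound on $d_{ij}$ at any relevant solution. Using that $x$ is binary with $\dsum_j x_{ij}=1$, I would replace \eqref{bilinear} by the big-$M$ inequalities $z_i\geq d_{ij}-M_{ij}(1-x_{ij})$ for all $i,j$ (or, equivalently, introduce McCormick variables $w_{ij}$ linearising the products $d_{ij}x_{ij}$ and set $z_i=\dsum_j w_{ij}$). For the ordered median term I would invoke the linearisation recalled just before the statement (see \cite{OO2012,PR_chapter2015}): introduce assignment-type binary variables $\sigma_{ik}\in\{0,1\}$ realising a sorting permutation, plus $O(n^2)$ linear constraints, after which $\dsum_i\lambda_i z_{(i)}$ becomes a linear function of the new variables; since $\lambda_1\geq\cdots\geq\lambda_n\geq0$ one could alternatively use the purely linear $k$-sum representation of \cite{OT_IPL03} and add no binary variables at all, but the sorting formulation already suffices.

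Putting the three reductions together, \eqref{dompn:0}--\eqref{dompn:D} is rewritten with a linear objective, linear constraints, and binary variables ($x$ plus the $O(n^2)$ ordering variables), i.e.\ as a mixed-integer linear program, which proves the statement. The step I would be most careful about is the linearisation of \eqref{bilinear}: one must check that replacing the equalities $z_i=\dsum_j d_{ij}x_{ij}$ by the one-sided inequalities $z_i\geq d_{ij}-M_{ij}(1-x_{ij})$ does not change the optimal value. This follows from the assignment constraints in $\X$ (exactly one $x_{ik}=1$ per $i$) together with the monotonicity and nonnegativity of the $\lambda$-weights, which make the objective nondecreasing in each $z_i$, so that at optimality $z_i=d_{ik}=\|a_i-\bar a_k\|$; the boundedness of the neighbourhoods is precisely what guarantees the required finite constants $M_{ij}$ exist.
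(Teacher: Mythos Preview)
Your proposal is correct and follows essentially the same approach as the paper: the key step---replacing $d_{ij}\geq\|a_i-\bar a_j\|$ by the finitely many linear inequalities $d_{ij}\geq e^{t}(a_i-\bar a_j)$ for $e$ ranging over the extreme points of the dual unit ball $P^*$---is exactly what the paper's proof does. Your treatment is in fact more explicit than the paper's, which only writes out the norm linearisation and relegates the linearisation of the ordered median objective (via the $O(n^2)$ sorting binaries of \cite{OO2012,PR_chapter2015}) and of the bilinear terms \eqref{bilinear} to the surrounding discussion and to the later formulations; your careful justification of the big-$M$ replacement of \eqref{bilinear} is a welcome addition that the paper leaves implicit.
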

\begin{proof}
The proof follows noting that constraints in the form $Z \geq \|X-Y\|$, as those that appear in the description of $\D$, can reformulated as:
$$
Z \geq e^t (X-Y), \; \forall e \in {\rm Ext}(P^*),
$$
where ${\rm Ext}(P^*)$ the set of extreme points of $P^* = \{v \in \R^d: v^t b_g  \leq 1, g=1, \ldots, |Ext(P)|\}$, the unit ball of the dual norm of $P$ (see \cite{NP_2005,Ward-Wendell} ).
\end{proof}

The following example illustrates the new framework under study.

\begin{ex}\label{ex:1}
Let us consider a set of customers/potential facilities with coordinates in the plane $\mathcal{A}=\{(0,5)$, $(1,1)$, $(1,6)$, $(1,4)$, $(5,3)$, $(10,4)$, $(6.5,0)$, $(8,6)\}$, and travel distances measured with the Euclidean norm. The solutions for the $2$-median, the $2$-center and the $2$-$4$-center ($K$-center with $K=4$) are drawn in Figure \ref{fig:ex1_0} (there, stars represent open facilities, customers are identified with dots and lines are the allocation patterns).

\begin{figure}[h]
\begin{center}
\fbox{\begin{tikzpicture}[scale=0.35]
\coordinate(X1) at (0,5);
\coordinate(X2) at (1,1);
\coordinate(X3) at (1,6);
\coordinate(X4) at (3,4);
\coordinate(X5) at (5.5,3);
\coordinate(X6) at (10,4);
\coordinate(X7) at (6.5,0);
\coordinate(X8) at (8,6);
\draw[gray] (X2)--(X1);
\draw[gray] (X3)--(X1);
\draw[gray] (X4)--(X5);
\draw[gray] (X6)--(X5);
\draw[gray] (X7)--(X5);
\draw[gray] (X8)--(X5);
\node[fill,star,star points=5, scale=0.5] at (X1) {};
\fill (X2) circle (2pt);
\fill (X3) circle (2pt);
\fill (X4) circle (2pt);
\node[fill,star,star points=5, scale=0.5] at (X5) {};
\fill (X6) circle (2pt);
\fill (X7) circle (2pt);
\fill (X8) circle (2pt);
\end{tikzpicture}}
\fbox{\begin{tikzpicture}[scale=0.35]
\coordinate(X1) at (0,5);
\coordinate(X2) at (1,1);
\coordinate(X3) at (1,6);
\coordinate(X4) at (3,4);
\coordinate(X5) at (5.5,3);
\coordinate(X6) at (10,4);
\coordinate(X7) at (6.5,0);
\coordinate(X8) at (8,6);
\draw[gray] (X1)--(X4);
\draw[gray] (X2)--(X4);
\draw[gray] (X3)--(X4);
\draw[gray] (X6)--(X5);
\draw[gray] (X7)--(X5);
\draw[gray] (X8)--(X5);
\fill (X1) circle (2pt);
\fill (X2) circle (2pt);
\fill (X3) circle (2pt);
\node[fill,star,star points=5, scale=0.5] at (X4) {};
\node[fill,star,star points=5, scale=0.5] at (X5) {};
\fill (X6) circle (2pt);
\fill (X7) circle (2pt);
\fill (X8) circle (2pt);
\end{tikzpicture}}
\fbox{\begin{tikzpicture}[scale=0.35]
\coordinate(X1) at (0,5);
\coordinate(X2) at (1,1);
\coordinate(X3) at (1,6);
\coordinate(X4) at (3,4);
\coordinate(X5) at (5.5,3);
\coordinate(X6) at (10,4);
\coordinate(X7) at (6.5,0);
\coordinate(X8) at (8,6);
\draw[gray] (X1)--(X4);
\draw[gray] (X2)--(X4);
\draw[gray] (X3)--(X4);
\draw[gray] (X5)--(X4);
\draw[gray] (X7)--(X6);
\draw[gray] (X8)--(X6);
\fill (X1) circle (2pt);
\fill (X2) circle (2pt);
\fill (X3) circle (2pt);
\node[fill,star,star points=5, scale=0.5] at (X4) {};
\fill (X5) circle (2pt);
\node[fill,star,star points=5, scale=0.5] at (X6) {};
\fill (X7) circle (2pt);
\fill (X8) circle (2pt);
\end{tikzpicture}}
\end{center}
\caption{Solutions for $2$-median, $2$-center and $2$-$4$-center for the data in Example \ref{ex:1}.\label{fig:ex1_0}}
\end{figure}
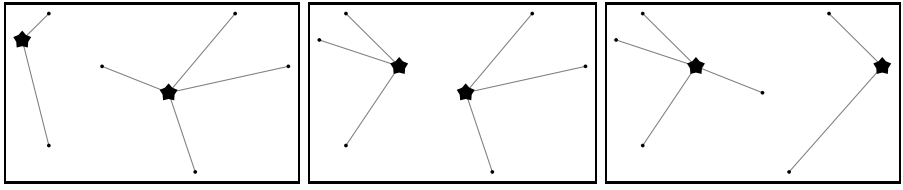

Note that, as expected, the solutions for the three DOMP problems highly depend on the $\lambda$-weights,  being the optimal set of open facilities different for the three cases.

Let us now consider, for each demand point, a neighbourhood defined as the Euclidean disk with radii $r \in \{1, 0.6, 1, 0.6, 2.4, 2.4, 0.8, 1.6\}$ (see Figure \ref{fig:ex1_1}).

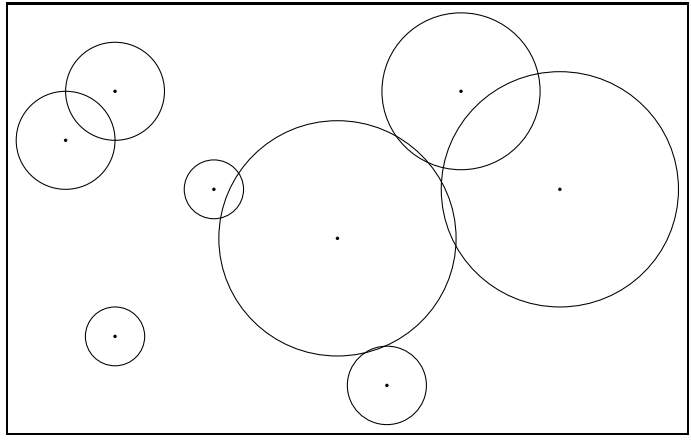
\begin{figure}[h]
\begin{center}
\fbox{\begin{tikzpicture}[scale=0.65]


\coordinate(X1) at (0,5);
\coordinate(X2) at (1,1);
\coordinate(X3) at (1,6);
\coordinate(X4) at (3,4);
\coordinate(X5) at (5.5,3);
\coordinate(X6) at (10,4);
\coordinate(X7) at (6.5,0);
\coordinate(X8) at (8,6);

\draw (X1) circle (1.0);
\draw (X2) circle (0.6);
\draw (X3) circle (1);
\draw (X4) circle (0.6);
\draw (X5) circle (2.4);
\draw (X6) circle (2.4);
\draw (X7) circle (0.8);
\draw (X8) circle (1.6);

\fill (X1) circle (1pt);
\fill (X2) circle (1pt);
\fill (X3) circle (1pt);
\fill (X4) circle (1pt);
\fill (X5) circle (1pt);
\fill (X6) circle (1pt);
\fill (X7) circle (1pt);
\fill (X8) circle (1pt);


%
\end{tikzpicture}}
\end{center}
\caption{Neighbourhoods for the facilities of Example \ref{ex:1}.\label{fig:ex1_1}}
\end{figure}

The new facilities, now, are not restricted to be exactly located in the given coordinates but in a disk around them. We consider the radius of its neighborhood (disk) as a mesure of the set-up cost for  each facility, that is $f(a_i)=r_i$. The solutions of the neighbourhood version of the $2$-median, $2$-center and $2$-$4$-center problems are shown in Figure \ref{fig_ex1_2}.

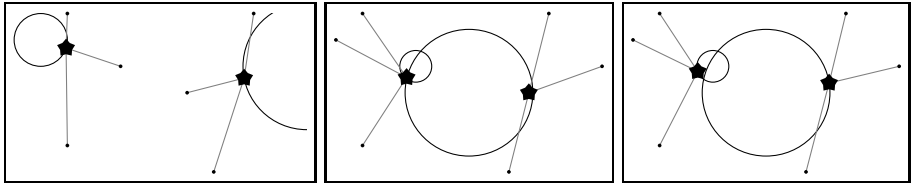
\begin{figure}[h]
\begin{center}
\fbox{\begin{tikzpicture}[scale=0.35]
\coordinate(X1) at (0.951786,4.693236);
\coordinate(X2) at (1,1);
\coordinate(X3) at (1,6);
\coordinate(X4) at (3,4);
\coordinate(X5) at (5.5,3);
\coordinate(X6) at (7.641752,3.554279);
\coordinate(X7) at (6.5,0);
\coordinate(X8) at (8,6);

\draw (0,5) circle (1.0);
\begin{scope}
\clip (0,0) rectangle (10,6); 
\draw (10,4) circle (2.4);
\end{scope}

\draw[gray] (X2)--(X1);
\draw[gray] (X3)--(X1);
\draw[gray] (X4)--(X1);
\draw[gray] (X5)--(X6);
\draw[gray] (X7)--(X6);
\draw[gray] (X8)--(X6);
\node[fill,star,star points=5, scale=0.5] at (X1) {};
\fill (X2) circle (2pt);
\fill (X3) circle (2pt);
\fill (X4) circle (2pt);
\fill (X5) circle (2pt);
\node[fill,star,star points=5, scale=0.5] at (X6) {};
\fill (X7) circle (2pt);
\fill (X8) circle (2pt);
\end{tikzpicture}}
\fbox{\begin{tikzpicture}[scale=0.35]
\coordinate(X1) at (0,5);
\coordinate(X2) at (1,1);
\coordinate(X3) at (1,6);
\coordinate(X4) at (2.655601,3.577273);
\coordinate(X5) at (7.249924,3.000019);
\coordinate(X6) at (10,4);
\coordinate(X7) at (6.5,0);
\coordinate(X8) at (8,6);

\draw (3,4) circle (0.6);
\draw (5,3) circle (2.4);

\draw[gray] (X1)--(X4);
\draw[gray] (X2)--(X4);
\draw[gray] (X3)--(X4);
\draw[gray] (X6)--(X5);
\draw[gray] (X7)--(X5);
\draw[gray] (X8)--(X5);
\fill (X1) circle (2pt);
\fill (X2) circle (2pt);
\fill (X3) circle (2pt);
\node[fill,star,star points=5, scale=0.5] at (X4) {};
\node[fill,star,star points=5, scale=0.5] at (X5) {};
\fill (X6) circle (2pt);
\fill (X7) circle (2pt);
\fill (X8) circle (2pt);
\end{tikzpicture}}
\fbox{\begin{tikzpicture}[scale=0.35]
\coordinate(X1) at (0,5);
\coordinate(X2) at (1,1);
\coordinate(X3) at (1,6);
\coordinate(X4) at (2.431780,3.807320);
\coordinate(X5) at (7.366672,3.366584);
\coordinate(X6) at (10,4);
\coordinate(X7) at (6.5,0);
\coordinate(X8) at (8,6);

\draw (3,4) circle (0.6);
\draw (5,3) circle (2.4);
\draw[gray] (X1)--(X4);
\draw[gray] (X2)--(X4);
\draw[gray] (X3)--(X4);
\draw[gray] (X6)--(X5);
\draw[gray] (X7)--(X5);
\draw[gray] (X8)--(X5);
\fill (X1) circle (2pt);
\fill (X2) circle (2pt);
\fill (X3) circle (2pt);
\node[fill,star,star points=5, scale=0.5] at (X4) {};
\node[fill,star,star points=5, scale=0.5] at (X5) {};
\fill (X6) circle (2pt);
\fill (X7) circle (2pt);
\fill (X8) circle (2pt);
\end{tikzpicture}}
\end{center}
\caption{Solutions for $2$-median, $2$-center and $2$-$4$-center with neighbourhood for the data in Example \ref{ex:1}.\label{fig_ex1_2}}
\end{figure}


\end{ex}

In what follows, we derive some structural properties of DOMP that are inherited to the OMPN.

For each $i,j =1, \ldots, n$, we denote by $\widehat{D}_{ij} = \max \{\|a_i - \bar a_j\|: \bar a_j \in \N_j\},$ and $\widehat{d}_{ij} = \max \{\|a_i - \bar a_j\|: \bar a_j \in \N_j\}$, upper and lower bounds for the distances between the $i$th customer and the $j$th potential facility, respectively.

\begin{prop}\label{prop:1}
The following properties are satisfied:
\begin{enumerate}
\item There exists an optimal solution of \eqref{dompn0} in which the $p$ smaller travel distances equal $0$.
\item Let $\mathcal{B}\subseteq \mathcal{A}$ a set of $p$ facilities such that its ordered cost $C(\mathcal{B}) \leq UB$ and such that $\min_{j\neq i} \widehat{d}_{ij} > \dfrac{UB}{\dsum_{i=1}^m \lambda_i}$ for some $m=2, \ldots, n$, then the $i$-th client is sorted at most in position $m$ in the whole sorted set of optimal distances.
\end{enumerate}
\end{prop}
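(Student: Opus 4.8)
The two assertions are independent; I would establish them separately. For (1), the plan is to carry over the elementary fact behind the discrete case: an open facility serves its own co-located customer at travel distance $0$. This still applies here because $a\in\N(a)$ for every site $a$, so the facility opened at $b\in\mathcal{B}$ may be placed at $a_b$ itself, a feasible location, making $\|a_b-\bar b\|=0$. The cleaner, formulation-level version, which is the one I would actually record, is to note that $\D$ only imposes $d_{ij}\ge\|a_i-\bar a_j\|$ for $i\neq j$, so the self-distances $d_{jj}$ are free nonnegative variables and can be taken equal to $0$; since $x_{jj}=1$ forces $x_{jk}=0$ for all $k\neq j$, this gives $z_j=d_{jj}=0$ at every open site. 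Either way, the $p$ customers whose indices lie in $\mathcal{B}$ have travel distance $0$, and, distances being nonnegative, the $p$ smallest ordered values $D_{(n-p+1)},\dots,D_{(n)}$ vanish.

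For (2), I would argue by contradiction, in the case $i\notin\mathcal{B}$ (a customer sitting at an open site already has distance $0$ by part (1)). Fix $\mathcal{B}$ together with an optimal placement of its facilities, and suppose customer $i$ occupies a position $r>m$ in the sorted distance vector, so that $D_{(1)},\dots,D_{(m)}$ are all at least $D_i$. Since customer $i$ is served by a facility opened at some site $k\in\mathcal{B}$ with $k\neq i$, one has $D_i=\min_{k\in\mathcal{B}}\|a_i-\bar b_k\|\ge\min_{k\neq i}\widehat{d}_{ik}>UB/\sum_{l=1}^m\lambda_l$. Combining, $\sum_{k=1}^m\lambda_k D_{(k)}\ge\big(\sum_{k=1}^m\lambda_k\big)D_i>UB$; and since the remaining $\lambda_k D_{(k)}$ ($k>m$) and the set-up costs are nonnegative, $C(\mathcal{B})>UB$, contradicting the hypothesis. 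Hence $r\le m$.

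The genuinely delicate point is in part (1). Under the literal reading $D_i=\min_{b\in\mathcal{B}}d(a_i,\bar b)$, relocating an open facility from $\bar b$ to its anchor site $a_b$ drives $D_b$ to $0$ but may increase the distances of the other customers it serves, so a naive exchange argument need not preserve optimality; the way around this is precisely the $\D$-based observation above, since $d_{jj}$ is genuinely unconstrained and can be set to $0$ independently of where $\bar b$ is positioned to serve the remaining customers. (This is also what will make the formulations of Section~\ref{sec:formulations} need to sort only $n-p$ of the distance values.) Part (2) is then routine once it is read with the proviso $i\notin\mathcal{B}$ — and it is exactly this dichotomy, ``customer $i$ is at an open site, or it is sorted among the first $m$ positions'', that makes the result useful for fixing variables.
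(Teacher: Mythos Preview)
Your argument is correct and matches the paper's approach: Part (1) via zero self-distance at open facilities, Part (2) via the same contradiction bounding $\sum_{l\le m}\lambda_l D_{(l)}$ from below by $D_i\sum_{l\le m}\lambda_l$. The paper's proof of (1) is a single sentence (``if the facility $a$ is open, the travel costs between $a$ and $a$ are zero'') and does not engage with the subtlety you flag about relocating $\bar b$ away from $a_b$; your formulation-level observation that $d_{jj}$ is unconstrained in $\D$, together with the explicit proviso $i\notin\mathcal{B}$ in Part (2), are genuine clarifications that the paper leaves implicit.
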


\begin{proof} $ $

{\it 1.} The result follows from the observation that if the facility $a \in \mathcal{A}$ is an open facility, the travel costs between $a$ and $a$ are zero.

{\it 2.} Assume that the $i$th customer is sorted in position $r \geq m$ in the sorting sequence of distances, i.e.,  $D_{(1)} \geq \ldots \geq D_{(m)} \geq D_{(r)} = D_i$. Then, we have that:
$$
c(\mathcal{B}) = \dsum_{l=1}^n \lambda_l D_{(l)} + \dsum_{b\in B} f(b) \geq \dsum_{l=1}^m \lambda_l D_{(l)} \geq \dsum_{l=1}^m \lambda_l D_{i} = D_i \left(\dsum_{l=1}^m \lambda_l\right) > UB
$$
which contradicts the hyphotesis.
\end{proof}


\section{MINLP Formulations for the OMPN}
\label{sec:formulations}

In this section, we describe different mathematical programming formulations for solving general OMPN. In particular, we extend the formulations presented in \cite{BEP14}, \cite{BDNP_COR06} and \cite{OT_IPL03} to our problem. As mentioned above, the main difference between the DOMP and the OMPN problem is that in the OMPN the distances are not part of the input, but part of the decision. Hence, the formulations for the DOMP  based on preprocessing the travel distances matrix (as those proposed in \cite{LPP_COR17},\cite{MNV_MMOR10}, \cite{MNPV_DAM09} or \cite{P_ORP08}) cannot be applied to our framework.

Observe that, in OMPN, an adequate representation of $\D$ is crucial for the development of efficient solution approaches for the problem. We assume that the neighborhoods belong to a family of convex sets that allows us to represent most of the convex shapes which are useful in practice, and that can be efficiently handled by commercial optimization solvers: second order cone (SOC)-representable sets \cite{LVBL_SOC98}. SOC-representable sets are convex sets defined by second-order cone constraints in the form:
\begin{equation}\label{soc}\tag{SOC}
\|A_i\,x-b_i\|_2 \leq c_i^t x + d_i, \forall i=1, \ldots, M,\\
x \in \R^{N},
\end{equation}
where $A_1, \ldots, A_i \in \R^{M_i\times N}$, $b_i \in \R^{M_i}$, $c_i\in \R^{N}$, $d_i \in \R$, for $i=1, \ldots, M$, and $\|\cdot\|_2$ is the Euclidean norm. Most of the state-of-the-art solvers are capable to efficiently solve optimization problems involving SOC constraints by means of quadratic constraints with positive definite matrices, second order cone constraints (in the form $x^t x \leq y^2$, for $y\geq 0$) or rotated second order cone constraints ($x^t x \leq yz$ with $y, z\geq 0$). SOC constraints allow one to represent, not only Euclidean balls, but any $\ell_\tau$-norm ball (see \cite{BEP14} for further details on the explicit representation of $\ell_\tau$-norm based distance constraints as a set of SOC constraints for any $\tau\in \Q$ with $\tau\geq 1$). Clearly, any polyhedron is SOC-representable (setting $A$ and $b$ equal to zero) so any intersection of $\ell_\tau$-norm balls and polyhedra is suitable to be represented as a set of second order cone constraints. Hence, both our neighborhoods and the distances involved in our problem will be defined as SOC-constraints, being then $\D$ a SOC-representable set.

For the sake of simplicity, and without loss of generality, we assume that the neighborhood of each $a\in \mathcal{A}$ is a $\ell_\tau$-norm ball, i.e. $\N(a)= \{z \in \R^d: \|z-a\|_\tau \leq r_a\}$, for some $r_a \in \R_+$ and $\tau \in \Q$ with $\tau \geq 1$.

Also, we consider that the travel distances are induced by a $\ell_\nu$-norm with $\nu\in \Q$ and $\nu\geq 1$. With these settings, we  explicitly describe $\D$ as follows:
$$
\D = \{(d,\bar a) \in \R_+^{n\times n} \times \R^{n\times d}: d_{ij} \geq \|a_i - \bar a_j\|_\nu,  r_j  \geq \|a_j - \bar a_j\|_\tau, i,j=1, \ldots, n\}
$$
where $r_j$ denotes the radius of the neighborhood $\N(a_j)$, i.e., $r_j=r_{a_j}$.
 
 The following result, whose proof is straightforward from \cite[Theorem 2]{BEP14}, allows us to efficiently represent the set $\D$ when the involved norms are $\ell_\tau$-based norms.
 \begin{prop}
 Let $\tau= \frac{r_\tau}{s_\tau}\geq 1$ and $\nu=\frac{r_\nu}{s_\nu} \geq 1$ with $r_\tau, s_\tau, r_\nu, s_\nu \in \Z_+$ and $\gcd(r_\tau,s_\tau)=\gcd(r_\nu,s_\nu)=1$. Then, $\D$ is representable as a set of $(n^2 +n)(2d+1)$ linear inequalities and $nd(n\log\;r_\nu  + \log\;r_\tau)$ second order cone contraints.
 \end{prop}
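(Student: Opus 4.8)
The plan is to reduce the whole statement to the single-norm representation result of \cite[Theorem~2]{BEP14} together with elementary bookkeeping. The set $\D$ is the intersection of the $n^2$ constraints $d_{ij}\ge\|a_i-\bar a_j\|_\nu$, the $n$ constraints $r_j\ge\|a_j-\bar a_j\|_\tau$, and the sign constraints $d\in\R_+^{n\times n}$; the last family is free, since $d_{ij}\ge\|a_i-\bar a_j\|_\nu$ already forces $d_{ij}\ge0$. So it is enough to represent each of the $n^2+n$ norm constraints and add the resulting counts, introducing a fresh block of auxiliary variables for each of them so that the blocks do not interact.

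For a generic constraint $\|v\|_\mu\le t$, with $v\in\R^d$ affine in the decision variables, $t$ a (possibly constant) nonnegative scalar, and $\mu=\tfrac{r_\mu}{s_\mu}\ge1$ in lowest terms, \cite[Theorem~2]{BEP14} proceeds as follows: introduce $u_1,\dots,u_d$ with $-u_k\le v_k\le u_k$ ($2d$ linear inequalities) and $\xi_1,\dots,\xi_d\ge0$ with $\sum_{k=1}^d\xi_k\le t$ ($1$ linear inequality); by homogeneity the constraint becomes equivalent to the $d$ inequalities $u_k^{\mu}\le\xi_k\,t^{\mu-1}$, each of which, after clearing denominators, is a rational-power (geometric-mean type) inequality representable by a balanced binary tree of $O(\log r_\mu)$ rotated second order cone constraints. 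Thus a single $\ell_\mu$ constraint in $\R^d$ costs $2d+1$ linear inequalities and $d\log r_\mu$ second order cone constraints, the exact rounding being absorbed in the tree; this is consistent with $\ell_1$ being polyhedral, since then $r_\mu=1$ and no conic constraint is created.

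Applying this to $\D$: each of the $n^2$ constraints $d_{ij}\ge\|a_i-\bar a_j\|_\nu$ contributes $2d+1$ linear inequalities and $d\log r_\nu$ second order cone constraints, and each of the $n$ constraints $r_j\ge\|a_j-\bar a_j\|_\tau$ contributes $2d+1$ linear inequalities and $d\log r_\tau$ second order cone constraints. Summing, $\D$ is representable by $(n^2+n)(2d+1)$ linear inequalities and $n^2 d\log r_\nu + n d\log r_\tau = nd\,(n\log r_\nu+\log r_\tau)$ second order cone constraints, as claimed. The only genuinely non-trivial step is the one delegated to \cite[Theorem~2]{BEP14}, namely the explicit conic encoding of the rational-power inequality $u^{r_\mu}\le\xi^{s_\mu}t^{\,r_\mu-s_\mu}$ with a number of conic constraints logarithmic in $r_\mu$ (pad the exponent vector to a power of two, then collapse the product pairwise); once the per-constraint costs are taken from that theorem, the present proposition is a pure counting argument, and the independence of the $n^2+n$ blocks of auxiliary variables is what makes the costs add.
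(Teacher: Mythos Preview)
Your proposal is correct and follows exactly the route the paper indicates: the paper does not give a self-contained proof but simply asserts the result is ``straightforward from \cite[Theorem~2]{BEP14}'', and your argument is precisely the bookkeeping that makes this explicit. Your per-constraint counts ($2d+1$ linear, $d\log r_\mu$ conic) and the split into $n^2$ $\ell_\nu$-constraints plus $n$ $\ell_\tau$-constraints reproduce the claimed totals.
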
 

\subsection{The three index formulation}

The first formulation is based on the one proposed in \cite{BDNP_COR06}, which uses, apart from the $x_{jj}$-variables described above, the following set of sorting/allocation binary variables for the DOMP:

$$
w_{ij}^k = \left\{\begin{array}{cl} 1 & \mbox{if customer $i$ is allocated to facility $j$ and its distance, $\|a_i-\bar a_j\|$,}\\
& \mbox{is sorted in the $k$th position.}\\
0 & \mbox{otherwise}.\end{array}\right.
$$

This formulation reads as follows:

\begin{align}
\min &\dsum_{i, j, k=1}^n \lambda_k d_{ij} w_{ij}^{k} + \dsum_{j=1}^n f_j x_{jj}\label{domp1}\tag{${\rm OMPN}_{3I}$}\\
\mbox{s.t. } & \dsum_{j,k=1}^n w_{ij}^{k}=1, \forall i=1, \ldots,n,\label{domp1:1}\\
&  \dsum_{i,j=1}^n w_{ij}^{k}=1, \forall k=1, \ldots,n,\label{domp1:2}\\
& \dsum_{k=1}^n w_{ij}^k \leq x_{jj}, \forall i, j=1, \ldots, n,\label{domp1:3}\\
& \dsum_{j=1}^n x_{jj}= p,\label{domp1:4}\\
& \dsum_{i,j=1}^n d_{ij} w_{ij}^{k-1} \geq \dsum_{i,j=1}^n d_{ij} w_{ij}^{k}, \forall k=2, \ldots, n,\label{domp1:5}\\
& w_{ij}^{k} \in \{0,1\}, \forall i,j,k,=1, \ldots, n,\nonumber\\
& x_{jj} \in \{0,1\}, \forall j=1, \ldots, n.\nonumber\\
& (d,\bar a) \in \mathcal{D}.\nonumber
\end{align}
The objective function assigns to each sorted distance its adequate weight $\lambda$. \eqref{domp1:1} (resp. \eqref{domp1:2}) ensures that each demand point (resp. each position) is assigned to a unique facility and a unique position (resp. demand point). \eqref{domp1:3} assures that allocation is not allowed unless the plant is open, and \eqref{domp1:4} restrict the problem to open exactly $p$ facilities. Constraints \eqref{domp1:5} allows us a correct definition of the $w$-variables in which the sorting of the distances is imposed (the $(k-1)$th is at least as larger as the $k$-th distance).

Although the above formulation is valid for  OMPN, both the objective function and the set of constraints \eqref{domp1:5} are quadratic and non-convex. We introduce a new set of variables to account for the non linear terms in the above formulation:
$$
\theta_{ij}^{k} = d_{ij} w_{ij}^k, \quad i, j, k=1, \ldots, n.
$$
Using the $\theta$-variables, the objective function can be reformulated as:
$$
\dsum_{i, j, k=1}^n \lambda_k \theta_{ij}^{k} + \dsum_{j=1}^n f_j x_{jj}.
$$
The correct definition of the new variables and satisfaction of constraints \eqref{domp1:5} is assured by the following sets of linear constraints:
\begin{align*}
\theta_{ij}^{k} \geq& d_{ij} - \widehat{D}_{ij}(1-w_{ij}^{k}), &\forall i, j, k=1, \ldots, n,\\
\dsum_{i, j=1}^n \theta_{ij}^{k-1} \geq& \dsum_{i, j=1}^n  \theta_{ij}^{k}, &\forall i, j, k=1, \ldots, n,
\end{align*}
where the first set of constraints comes from the McCormick linear reformulation \cite{M_MP76} of the bilinear terms defining the $\theta$-variables, and the second is the reformulation of \eqref{domp1:5} with the new variables.

The formulation above, has $O(n^3)$ variables and $O(n^2)$ constraints. Properties \ref{prop:1} allow us to strengthen the formulation \eqref{domp1}. In particular, if $UB$ is a known upper bound for the optimal value of OMPN, then:
$$
w_{ij}^k = 0, \forall i, j, k =1, \ldots, n, \mbox{ such that } \dmin_{j\neq i} \widehat{D}_{ij} > \dfrac{UB}{\dsum_{l=k}^n \lambda_l}.
$$
Also, because of the relationship between the $w$ and $x$ variables we get that
$$
\dsum_{k=1}^n w_{jj}^k = x_{jj}, \forall j=1,\ldots, n,
$$
are valid equations for \eqref{domp1}.

\subsection{The $2$-index formulation}

The second formulation, also based on the one presented in \cite{BDNP_COR06}, considers an alternative representation of the sorting variables. It uses two different sets of variables. The first one allows us to sort the distances of supplying each of the customers:
$$
s_{ik}= \left\{\begin{array}{cl} 1 & \mbox{if the distance supported by the $i$th customer is sorted in the $k$th position.}\\
0 & \mbox{otherwise}.\end{array}\right.
$$
while the second represents the sorting (non decreasing) sequence of distances:
$$
\xi_k = \dsum_{i=1}^n s_{ik} \dsum_{j=1}^n d_{ij} x_{ij}, \quad k=1, \ldots, n.
$$
This representation allows us to simplify the formulation to the following with $O(n^2)$ variables and $O(n^2)$ constraints.

\begin{align}
\min & \dsum_{k=1}^n \lambda_k \xi_k + \dsum_{j=1}^n f_j x_{jj}\label{domp2}\tag{${\rm OMPN}_{2I}$}\\
\mbox{s.t. } & \xi_{k} \geq \xi_{k+1}, \forall k=1,\ldots, n-1,\label{domp2:1}\\
& \dsum_{k=1}^n \xi_k = \dsum_{i,j=1}^n d_{ij} x_{ij},\label{domp2:2}\\
& \xi_k \geq  d_{ij}x_{ij} - \widehat{D}_{ij}\; \left(1- s_{ik}\right), \forall i, k=1, \ldots, n,\label{domp2:3}
\end{align}
\begin{align}
&& \dsum_{i=1}^n s_{ik}= 1,\forall k=1, \ldots, n\label{domp2:4}\\
& \dsum_{k=1}^n s_{ik}= 1,\forall i=1, \ldots, n,\label{domp2:5}\\
& \xi_{i}\geq 0, \forall i, k=1, \ldots, n, \nonumber\\
& s_{ik} \in \{0,1\}, \forall i, k=1, \ldots, n,\nonumber\\
& x \in \X,  (d,\bar a) \in \mathcal{D}.\nonumber
\end{align}

The correct definition of the $\xi$-variables is assured by constraints \eqref{domp2:1}--\eqref{domp2:3}, while contraints \eqref{domp2:4} and \eqref{domp2:5} allows us the adequate modeling of the $s$-variables.

As in \eqref{domp1}, to avoid nonconvex terms in the formulation, the bilinear terms $d_{ij}x_{ij}$ can be linearized by introducing a new variable $\theta_{ij} = d_{ij}x_{ij}$ and replacing \eqref{domp2:2} and \eqref{domp2:3} by:
\begin{align}
\dsum_{k=1}^n \xi_k &= \dsum_{i,j=1}^n \theta_{ij},\label{domp2:6}\\
\xi_k & \geq \theta_{ij}  - \widehat{D}_{ij} \left(1- s_{ik}\right), \forall i, j, k=1, \ldots, n.\label{domp2:7}\\
\theta_{ij} & \geq d_{ij}  - \widehat{D}_{ij} \left(1-x_{ij}\right), \forall i, j, k=1, \ldots, n.\label{domp2:8}
\end{align}

\subsection{The $K$-sum formulation}
\label{formulation:OT}

Ogryczak and Tamir  presented  in \cite{OT_IPL03} some linear programming formulations for the problem of minimizing the sum of the $K$ largest linear functions (which is a particular case of ordered median function). In the same paper, the approach is extended to the minimization of convex ordered median functions by means of a telescopic sum of $K$-sum functions. In the next formulation, we apply this idea to formulate the OMPN. For the sake of readability, we first formulate the $K$-center problem.

Let $\lambda=(1, \stackrel{K)}{\ldots}, 1, 0, \stackrel{n-K)}{\ldots}, 0)$. The ordered median function associated to this particular choice of $\lambda$ is known as the $K$-center function. With our notation, provided the set of distances $D_1, \ldots, D_n$, the $K$-center problem consists of minimizing $\Theta_K(D) = \sum_{i=1}^K D_{(i)}$. Such an objective function, is proved in \cite{OT_IPL03} to be equivalent to the following expression
$$
\Theta_K(D) = \dfrac{1}{n} \left( K \dsum_{i=1}^n D_i + \min_{t\in \R} \dsum_{i=1}^n (K\;(t-D_i)_+ + (n-K)\;(D_i-t)_+)\right)
$$
where $z_+=\max \{0, z\}$ for $z\in \R$, and the optimal value $t^*$ into the above expression coincides with $D_{(K)}$ (the $K$-th largest distance). Hence, to minimize $\Theta_K(D)$ one may proceed by solving:
\begin{align*}
\min \;\;K\;t + \dsum_{i=1}^n z_i\\
\mbox{s.t. } & z_i \geq D_i-t, \forall i=1, \ldots, n,\\
& z_i \geq 0, \forall i=1, \ldots, n,\\
& t \in \R.
\end{align*}
where the variable $z_i$ is identified with $(D_i -t)_+$ in the above formulation, for $i=1, \ldots, n$. Thus, incorporating the whole information to define the distances in our location problem, the $K$-center location problem with neighborhoods can be formulated as:
\begin{align}
\min & \;\; K\; t + \dsum_{i=1}^n z_i  + \dsum_{j=1}^n f_j x_{jj}\label{KC}\tag{${\rm KCN}_{OT}$}\\
\mbox{s.t. } & z_i \geq D_i-t, \forall i=1, \ldots, n,\\
& D_i \geq d_{ij}- \widehat{D}_{ij} (1-x_{ij}), \forall i,j, =1, \ldots, n,\label{kc:1}\\
& z_i, D_i \geq 0, \forall i=1, \ldots, n,\\
& t \in \R,\\
& x \in \X, (d,\bar a) \in \mathcal{D}.\nonumber
\end{align}

The above formulation can be extended to general convex ordered median functions. Observe that if $\lambda_1 \geq \cdots \geq \lambda_n \geq \lambda_{n+1}:=0$ one may represent the ordered median function of the distances $D_1,\ldots, D_n$ by using a telescopic sum:
$$
\dsum_{i=1}^{n-1} \lambda_i D_{(i)} =\dsum_{k=1}^{n-1} (\lambda_k-\lambda_{k+1}) \dsum_{i=1}^K D_{(i)} =\dsum_{K=1}^n \Delta_k \Theta_K(D)
$$
where $\Delta_k=\lambda_k-\lambda_{k+1} \geq 0$ for $k=1, \ldots, n-1$.

Thus, the convex ordered objective functions can be equivalently rewritten as a weighted sum of $K$-sums, being then suitable to be represented as in the $K$-center problem. With such an observation and introducing new $t$-variables (one for each of the $K$-sums involved) and $z$-variables, in this case with two indices to account not only for the customer ($i$) but also for the $K$-sum representation, one obtain the following valid formulation for the OMPN:

\begin{align}
\min &\;\dsum_{k=1}^{n} \Delta_k (kt_k + \dsum_{i=1}^n z_{ik}) +\dsum_{j=1}^n f_j x_{jj}\ \label{domp3}\tag{${\rm OMPN}_{OT}$}\\
\mbox{s.t. } &z_{ik} \geq D_{i} - t_k,\forall i, k=1, \ldots, n,\nonumber\\
& D_i \geq d_{ij}- \widehat{D}_{ij} (1-x_{ij}), \forall i,j, =1, \ldots, n,\\
& z_{ik}, D_i \geq 0, \forall i=1, \ldots, n,\\
& t_k \in \R, k=1, \ldots, n,\\
& x \in \X, (d,\bar a) \in \mathcal{D}.\nonumber
\end{align}

Observe that this formulation has also $O(n^2)$ variables and $O(n^2)$ constraints, but, as will be shown in the computational results, it has a better performance than \eqref{domp2} since it uses, intrinsically, the especial structure of the convex ordered median objective. 

\subsection{The BEP formulation}

Finally, we present a formulation, based on the one provided in \cite{BEP14} and that, as the one in the previous subsection, is only valid for the convex case. The idea under the formulation comes from the observation that, because $\lambda_1 \geq \cdots \geq \lambda_n \geq 0$, the evaluation of the ordered median function on a set of distances, is reached when choosing, among all possible permutations of the indices, $\mathcal{P}_n$, the one that maximizes the weighted sum, that is:
$$
\dsum_{i=1}^n \lambda_i D_{(i)} = \max_{\sigma \in \mathcal{P}_n} \dsum_{i=1}^n \lambda_i D_{\sigma(i)}.
$$

Then, if the permutations of $\{1, \ldots, n\}$ are represented by using the set of binary variables 
$$
p_{ik}=\left\{\begin{array}{cl} 1 & \mbox{if the permutation assigns index $i$ ito index $k$},\\
0 & \mbox{otherwise}\end{array}\right.,
$$
verifying that $\sum_{i=1}^n p_{ik}=1$ (for all $k=1, \ldots, n$) and $\sum_{k=1}^n p_{ik}=1$ (for all $i=1, \ldots, n$). 

Then, using these variables, the ordered median sum of a given set of values $D_1, \ldots, D_n$ is equivalent to:
\begin{eqnarray*}
 \dsum_{i=1}^n \lambda_i D_{(i)} & = & \max_{p \in \{0,1\}^{n\times n}} \dsum_{i,k=1}^n \lambda_k D_i p_{ik}\\
 & & \mbox{s.t. } \dsum_{i=1}^n p_{ik}= 1, \forall k=1, \ldots, n,\\
  & & \qquad \dsum_{k=1}^n p_{ik}= 1, \forall i=1, \ldots, n.
  \end{eqnarray*}
The optimization problem above is an assignment problem, then, the total unimodularity of the constraints matrix assures that its optimal value coincides with the one of its dual problem which reads:
\begin{align*}
\min\;\; &\dsum_{k=1}^n u_k + \dsum_{i=1}^n v_i \\
\mbox{s.t. } & u_i + v_k \geq \lambda_k D_i, \forall i, k=1, \ldots, n,\\
& u, v \in \R^n.
\end{align*}

Merging the above representation of the ordered median function into the location problem, the OMPN is reformulated as: 

\begin{align}
\min & \dsum_{k=1}^n u_k + \dsum_{i=1}^n v_i + \dsum_{j=1}^n f_j x_{jj}\label{domp4}\tag{${\rm OMPN}_{BEP}$}\\
\mbox{s.t. } & u_i + v_k \geq \lambda_k D_i, \forall i,k=1, \ldots, n,\label{domp4:0}\\
& D_i \geq d_{ij} - \widehat{D}_{ij}(1-x_{ij}), \forall i, j=1, \ldots, n,\label{domp4:1}\\
& x \in \X,\nonumber\\
& (d,\bar a) \in \mathcal{D}.\nonumber
\end{align}


\subsection{Comparison of Formulations}
\label{sec:compform}

In this section we provide some theoretical results that allow us to compare the tightening of the convex relaxations of each of the provided formulations. Let us denote by $z^R_{3I}$, $z^R_{2I}$, , $z^R_{OT}$ and $z^R_{BEP}$  optimal values of the continuous relaxations of formulations \eqref{domp1}, \eqref{domp2}, \eqref{domp3} and \eqref{domp4}, respectively.

\begin{propo}
\label{relaxations}
The following relations are verified:
$$
z^R_{3I} \leq z^R_{2I} \leq z^R_{BEP} = z^R_{OT}.
$$
\end{propo}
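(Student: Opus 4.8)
The plan is to establish the chain of inequalities by comparing the feasible regions of the continuous relaxations, working from right to left and then left to right. I will treat the two easy endpoints first and leave the genuinely substantive inequality, $z^R_{2I} \le z^R_{BEP}$, for last.

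First I would prove $z^R_{OT} = z^R_{BEP}$. Both formulations share exactly the same description of the distances, namely constraints of the form $D_i \ge d_{ij} - \widehat{D}_{ij}(1-x_{ij})$ together with $x \in \X_R$ and $(d,\bar a)\in\D$, and in both the objective separates as (ordered median term in the $D_i$) plus $\sum_j f_j x_{jj}$. So it suffices to show that, for any fixed nonnegative vector $D=(D_1,\dots,D_n)$, the optimal value of the inner LP in the $K$-sum (Ogryczak--Tamir) representation equals the optimal value of the inner LP in the BEP (dual-assignment) representation; in both cases this common value is precisely the ordered median value $\sum_{i=1}^n \lambda_i D_{(i)}$. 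For OT this is the telescopic-sum identity $\sum_i \lambda_i D_{(i)} = \sum_{K=1}^n \Delta_K \Theta_K(D)$ combined with the LP representation of each $\Theta_K$ recalled just before \eqref{domp3}; for BEP it is the total unimodularity argument recalled before \eqref{domp4}, which identifies the dual LP value with $\max_{\sigma}\sum_i \lambda_i D_{\sigma(i)} = \sum_i \lambda_i D_{(i)}$. Since the two formulations optimize the same objective over the same $(x,\bar a, d, D)$-region after projecting out the auxiliary $(t,z)$ resp. $(u,v)$ variables, their relaxation values coincide.

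Next I would prove $z^R_{3I} \le z^R_{2I}$ by exhibiting, from any feasible point of the relaxation of \eqref{domp2}, a feasible point of the relaxation of \eqref{domp1} of no larger objective value. Given $(x, s, \xi, \theta, d, \bar a)$ feasible for \eqref{domp2}'s relaxation, set $w_{ij}^k := x_{ij}\, s_{ik}$ and $\theta_{ij}^k := \theta_{ij}\, s_{ik}$ (or, if that violates the McCormick bound, simply $\theta_{ij}^k := x_{ij} s_{ik} d_{ij}$ after first noting the relaxation lets us take $\theta_{ij} = x_{ij} d_{ij}$ at optimum). I would then check that \eqref{domp1:1}--\eqref{domp1:4} follow from $\sum_j x_{ij} = 1$, $\sum_k s_{ik} = \sum_i s_{ik} = 1$, $x_{ij}\le x_{jj}$, $\sum_j x_{jj} = p$, that the sorted-sum constraints $\sum_{i,j}\theta_{ij}^{k-1}\ge\sum_{i,j}\theta_{ij}^k$ reduce to $\xi_{k-1}\ge\xi_k$ via \eqref{domp2:6}--\eqref{domp2:7}, and that the objective $\sum_{i,j,k}\lambda_k\theta_{ij}^k$ equals $\sum_k \lambda_k \xi_k$. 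The McCormick constraint $\theta_{ij}^k \ge d_{ij} - \widehat D_{ij}(1-w_{ij}^k)$ is the only one needing care, and it holds because $w_{ij}^k \le s_{ik}$ so $1 - w_{ij}^k \ge 1 - s_{ik}$, combined with \eqref{domp2:7}--\eqref{domp2:8}.

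The main obstacle is the middle inequality $z^R_{2I} \le z^R_{BEP}$. Here I would start from a feasible solution $(x, u, v, D, d, \bar a)$ of the relaxation of \eqref{domp4} and must manufacture $s$- and $\xi$-variables for \eqref{domp2}. The idea is that the BEP constraint $u_i + v_k \ge \lambda_k D_i$ forces $\sum_k u_k + \sum_i v_i \ge \sum_i \lambda_i D_{(i)}$ (again by LP duality for the assignment polytope applied to the fractional matrix $p_{ik}$ with $p_{ik} = s_{ik}$), so the BEP objective already dominates the true ordered median value of $D$; what remains is to show this is at least the optimal \eqref{domp2}-relaxation objective for the same $(x, d, \bar a)$. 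For that I would choose the $s$-variables to encode a fractional sorting permutation that realizes $\sum_i\lambda_i D_{(i)}$ — e.g. a genuine permutation matrix sorting the $D_i$ in nonincreasing order — and set $\xi_k := D_{(k)}$. One then verifies \eqref{domp2:1}, \eqref{domp2:4}, \eqref{domp2:5} trivially, \eqref{domp2:6} as $\sum_k D_{(k)} = \sum_i D_i \ge \sum_{i,j}\theta_{ij}$ with $\theta_{ij} := \min\{d_{ij}x_{ij},\ \text{whatever \eqref{domp2:8} forces}\}$, and \eqref{domp2:7} because $D_i \ge d_{ij} - \widehat D_{ij}(1-x_{ij}) \ge \theta_{ij} - \widehat D_{ij}(1-s_{ik})$ when $s_{ik}=1$ places $D_i$ in slot $k$ and $D_{(k)} = D_i$ there. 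The delicate point is reconciling the linearization variable $\theta_{ij}$ across the two formulations and checking the $\widehat D_{ij}$ big-$M$ terms line up; I expect this to require observing that \eqref{domp2:8} together with $x\in\X_R$ gives $\theta_{ij}\le d_{ij}$ in any optimal relaxation solution, so no feasibility is lost. Assembling these pieces yields a feasible \eqref{domp2}-relaxation point whose objective $\sum_k\lambda_k\xi_k + \sum_j f_j x_{jj} = \sum_i\lambda_i D_{(i)} + \sum_j f_j x_{jj} \le \sum_k u_k + \sum_i v_i + \sum_j f_j x_{jj}$, which is the desired inequality; combining the three comparisons gives the stated chain.
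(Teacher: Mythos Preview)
Your treatments of $z^R_{OT}=z^R_{BEP}$ and of $z^R_{3I}\le z^R_{2I}$ follow the same lines as the paper: the equality holds because the inner $(t,z)$- and $(u,v)$-LPs both evaluate to $\sum_i\lambda_iD_{(i)}$ over an identical $(x,d,\bar a,D)$-region, and the first inequality is obtained by sending a relaxed 2I solution to a relaxed 3I solution via products of the $x$- and $s$-variables. One small correction: the paper takes $\bar\theta_{ij}^k:=\xi_k\,s_{ik}\,x_{ij}$ rather than your $\theta_{ij}s_{ik}$; with the paper's choice one gets $\sum_{i,j}\bar\theta_{ij}^k=\xi_k$ directly, so the sorted-sum constraint reduces to $\xi_{k-1}\ge\xi_k$, whereas with your choice that reduction does not go through.

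The substantive problem is the middle inequality. The chain $z^R_{2I}\le z^R_{BEP}$ as printed in the proposition is \emph{not} valid in general, and the paper says so explicitly just after the proof: ``the relationship between $z^R_{2I}$ and $z^R_{BEP}$ (or $z^R_{OT}$) is not stated,'' followed by a five-point instance with $z^R_{BEP}=24.41<z^R_{2I}=34.21$ and Table~\ref{gana2I} reporting the fraction of instances where $z^R_{2I}\ge z^R_{BEP}$. The paper's own argument for the second bullet accordingly concludes only $z^R_{BEP}\ge z^R_{3I}$, not $z^R_{BEP}\ge z^R_{2I}$; the displayed chain is a misprint. Your sketch runs aground exactly where it must: constraint \eqref{domp2:6} is an \emph{equality} $\sum_k\xi_k=\sum_{i,j}\theta_{ij}$, so taking $\xi_k:=D_{(k)}$ forces $\sum_iD_i=\sum_{i,j}\theta_{ij}$. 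But at an optimal BEP relaxation one has $D_i=\max_j\{d_{ij}-\widehat D_{ij}(1-x_{ij})\}_+$, while \eqref{domp2:8} forces $\theta_{ij}\ge d_{ij}-\widehat D_{ij}(1-x_{ij})$ for \emph{every} $j$; with fractional $x$ the sum of these lower bounds over $j$ can strictly exceed the maximum, so no admissible $\theta$ makes the equality hold without pushing some $\xi_k$ above $D_{(k)}$. You only argued $\sum_iD_i\ge\sum_{ij}\theta_{ij}$, which is both the wrong direction and not enough for an equality constraint. There is no repair here: the inequality you set out to prove is false.
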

\begin{proof}

Denote by $F_{3I}$, $F_{2I}$, $F_{OT}$ and $F_{BEP}$ the feasible regions of \eqref{domp1}, \eqref{domp2}, \eqref{domp3} and \eqref{domp4} obtained when relaxing the integrality conditions of the models.

\begin{itemize}
\item Let us consider the mapping $\pi:  \R^n_+\times \R^{n \times n}_+ \times [0,1]^{n\times n} \times \X_R \times \D \rightarrow \R^{n^3}_+ \times [0,1]^3 \times [0,1]^n \times \D $ defined as:
$$
\pi (\xi, \theta, s, x, (d, \bar a)) = ((\xi_k s_{ik}  x_{ij})_{i,j,k=1}^n, (s_{ik}x_{ij})_{i,j,k=1}^n, (x_{jj})_{j=1}^n, (d, \bar a))
$$

First, let us check that $\pi(F_{2I}) \subseteq F_{3I}$, which would prove the first inequality. Let $(\theta, \xi, s, x, (d, \bar a)) \in F_{2I}$, and define $(\bar \theta, \bar x, (d,\bar a))=\pi(\theta, \xi, s, x, (d, \bar a))$, i.e.:
$$
\bar \theta_{ij}^k = \xi_k s_{ik} x_{ij}, \; \bar w_{ij}^k = s_{ik}x_{ij}, \; \bar x_{jj}= x_{jj}, \; \forall i,j,k=1, \ldots, n.
$$
By construction, the constraints \eqref{domp1:1}-\eqref{domp1:4} are verified:
\begin{itemize}
\item $\dsum_{j, k=1}^n \bar w_{ij}^k =\dsum_{j, k=1}^n s_{ik} x_{ij} = \dsum_{j=1}^n x_{ij} \dsum_{k=1}^n s_{ik} \stackrel{\eqref{domp2:5}}{=} \dsum_{j=1}^n x_{ij} \stackrel{x\in \X_R}{=} 1$.
\item $\dsum_{i, j=1}^n \bar w_{ij}^k =\dsum_{i, j=1}^n s_{ik} x_{ij} = \dsum_{i=1}^n s_{ik} \dsum_{j=1}^n x_{ij} \stackrel{x\in \X_R}{=} \dsum_{i=1}^n s_{ik} \stackrel{\eqref{domp2:4}}{=} 1$.
\item $\dsum_{k=1}^n \bar w_{ij}^k =  \dsum_{k=1}^n s_{ik}x_{ij} =  x_{ij} \dsum_{k=1}^n s_{ik} \stackrel{\eqref{domp2:5}}{=} x_{ij} \stackrel{x\in \X_R}{=} x_{jj}$.
\item $\dsum_{j=1}^n \bar x_{jj} = \dsum_{j=1}^n x_{jj} = \stackrel{x\in \X_R}{=} p$.
\item $\bar \theta_{ij}^k = \xi_k s_{ik} x_{ij} \stackrel{\eqref{domp2:7}, \eqref{domp2:8}}{\geq}  (d_{ij}- \widehat{D}_{ij}(2-s_{ik}-x_{ij})) \, s_{ik}x_{ij} = d_{ij}- \widehat{D}_{ij}(1- s_{ik}x_{ij}) + (\widehat{D}_{ij} - d_{ij})(1-s_{ik}x_{ij}) + \widehat{D}_{ij}s_{ik} x_{ij} (s_{ik}+x_{ij}) \stackrel{\bar w_{ij}^k = s_{ik}x_{ij}, s_{ik}, x_{ij}\leq 1, d_{ij}\leq \widehat{D}_{ij}}{\geq} d_{ij}-  \widehat{D}_{ij}(1-\bar w_{ij}^k)$.
\item $\dsum_{i, j=1}^n \bar \theta_{ij}^k = \dsum_{i,j=1}^n \xi_k s_{ik} x_{ij} = \xi_k \dsum_{i=1}^n s_{ik} \dsum_{j=1}^n x_{ij} \stackrel{x \in \X_R}{=} \xi_{k} \dsum_{i=1}^n s_{ik} \stackrel{\eqref{domp2:4}}{=} \xi_k \stackrel{\eqref{domp2:1}}{\geq} \xi_{k+1} = \dsum_{i, j=1}^n \bar \theta_{ij}^{k+1}$.
\end{itemize}
Then, $\pi (\theta, \xi, s, x, (d, \bar a)) \in F_{3I}$, so $\pi(F_{2I}) \subset F_{3I}$, i.e., any solution of the convex relaxation of \eqref{domp2} induces a solution of the convex relaxation of \eqref{domp1}. Furthermore, the objective values for $(\theta, \xi, s, x, (d, \bar a))$ in \eqref{domp2} and $\pi (\theta, \xi, s, x, (d, \bar a))$ in \eqref{domp1} coincides:
\begin{align*}
\dsum_{i,j,k=1}^n \lambda_k \bar \theta_{ij}^k + \dsum_{j=1}^n f_j\bar x_{jj} &= \dsum_{i,j,k=1}^n \lambda_k \xi_k s_{ik} x_{ij} + \dsum_{j=1}^n f_j x_{jj} \\
&= \dsum_{k=1}^n \lambda \xi_k \dsum_{i=1}^n s_{ik} \dsum_{j=1}^n x_{ij} \\
&\stackrel{x \in \X_R}{=}  \dsum_{k=1}\lambda_k \xi_k \dsum_{i=1}^n s_{ik} \\
& \stackrel{\eqref{domp2:4}}{=}  \dsum_{k=1}^n \lambda_k \xi_k
\end{align*}
Thus, $z_{2I}^R \geq z_{3I}^R$.
\item Let $(u, v, D, x, (d,\bar a)) \in \R^n \times \R^n \times \R^n_+ \times \X_R \times \D$ be the optimal solution of the continuous relaxation of \eqref{domp4}. Let $p_{ik}$ the optimal dual variables associated to constraint \eqref{domp4:0}. By optimality conditions they must verify:
\begin{align*}
\dsum_{i=1}^n p_{ik}=1, \forall k=1, \ldots, n,\\
\dsum_{k=1}^n p_{ik}=1, \forall i=1, \ldots, n.\\
\end{align*}
Let us construct the following vector in $\R^n_+ \times \R^{n\times n}_+ \times [0,1]^{n\times n} \times \X_R \times \D$:
$$
\left(\bar \xi, \bar \theta, \bar s, x, (d,\bar a)\right):= \left(\left(\dsum_{i=1}^n p_{ik} D_i\right)_{k=1}^n, \left(d_{ij}x_{ij}\right)_{i,j=1}^n, \left(p_{ik}\right)_{i,k=1}^n, x, (d,\bar x)\right).
$$
By construction, $\bar s$ clearly verifies \eqref{domp2:4} and \eqref{domp2:5}. Also, note from the construction of the BEP formulation that for given $D_1, \ldots, D_n$, the problem
\begin{align*}
\min &\dsum_{i=1}^n u_i + \dsum_{k=1}^n v_k\\
\mbox{s.t} & u_i+ v_k \geq D_i,\\
& u_i, v_k \in \R, \forall i, k=1, \ldots, n.
\end{align*}
is equivalent to
\begin{align*}
\max &\dsum_{i,k=1}^n \lambda_k D_i p_{ik}\\
\mbox{s.t} & \dsum_{i=1}^n p_{ik}=1, \forall k=1, \ldots, n,\\
& \dsum_{k=1}^n p_{ik}=1, \forall i=1, \ldots, n,\\
& p_{ik} \in \{0,1\}, \forall i,k =1, \ldots, n,
\end{align*}
which is an assignment problem related to the \textit{best} sorting on the variables based on their costs given by $D_1, \ldots, D_n$. Because the monotonicity and nonnegativity of the $\lambda$-weights, this is equivalent to compute the ordered median sum $\dsum_{i=1}^n \lambda_i D_{(i)} =  \dsum_{i,k=1}^n \lambda_k D_i p_{ik}$ (where $p$ are the corresponding solution to the problem above indicating if $p_{ik}=1$ that element $i$ is sorted in the $k$th position). Hence, $\bar \xi_k = \dsum_{i=1}^n p_{ik} D_i \geq \dsum_{i=1}^n p_{ik+1} D_i = \bar \xi_{k+1}$ (constraint \eqref{domp2:1}). The proof of the verification of remainder constraints are straightforward. Also, the reader can easily check that the objective values of both solutions coincide. Thus, $z^R_{BEP} \geq z^R_{3I}$
\item Let $(u, v, D, x, (d,\bar a)) \in \R^n \times \R^n \times \R^n_+ \times \X_R \times \D$ be the optimal solution of the continuous relaxation of \eqref{domp4}. Let us construct a feasible solution for the continuous relaxation of \eqref{domp3}. Let $(\bar t, \bar z) \in \R^n \times \R^{n\times n}_+$ the solution to the problem
\begin{align*}
\min & \dsum_{k=1}^{n-1} \Delta_k (kt_k + \dsum_{i=1}^n z_{ik})\\
s.t. & z_{ik} \geq D_i - t_k, \forall i, k=1, \ldots, n,\\
& z_{ik} \geq 0, \forall i, k=1, \ldots, n,\\
& t_k \in \R, \forall k=1, \ldots, n.
\end{align*}
By the construction in Subsection \ref{formulation:OT}, the vector $(\bar t, \bar z, D, x, (d,\bar a))$ is a feasible solution to the continuous relaxation of \eqref{domp3} with same objective value than $(u, v, D, x, (d,\bar a))$ in the continuous relaxation of \eqref{domp4}, being then $z_{OT}^R \leq z_{BEP}^R$. Observe that the oposite direction can be derived with a similar reasoning. Thus, $z_{OT}^R = z^R_{BEP}$.
\end{itemize}
\end{proof}

In the above result, the relationship between $z^R_{2I}$ and $z^R_{BEP}$ (or $z^R_{OT}$) is not stated. One may think that the continuous relaxation of \eqref{domp4} is tightener than \eqref{domp2}, because the first exploits the monotonicity of the $\lambda$-weights. However, that is not always true as illustrated in the following example.

\begin{ex}
Let us consider five points in $\R^2$, $\mathcal{A}=\{(2,92),$ $(33,70),$ $(65,50),$ $(73, 69),$ $(40, 63)\}$ and neighborhoods defined as Euclidean disks with radii $\{2, 1, 0.05, 5, 1\}$. If the distance measure is the Euclidean norm and we choose as ordered median function the one with $\lambda=(1,1,1,1,1)$ and $p=2$, we get that:
$$
z^R_{3I} = 2.8348 < z^R_{OT}=z^R_{BEP} = 24.4140 <  z^R_{2I} = 34.2145 < z_{OPT}^* = 68.4751
$$
where $z_{OPT}$ is the optimal value of the OMPN. 
\end{ex}

\subsection{Computational Comparison of Relaxations}

We have run a series of experiments to study the computational performance of the formulations \eqref{domp1}, \eqref{domp2}, \eqref{domp3} and \eqref{domp4} and also to know the computational limits of the OMPN. We have randomly generated instances of $n$ demand points in $[0,100]^2$ and $[0,100]^3$ with $n$ ranging in $\{5, 6, 7, 8, 9, 10, 20, 30\}$.  Five random instances were generated for each number of points. We solved OMPN problems $p$ (number of new facilities to be located) ranging in $\{2,3,5\}$ (provided that $p<n$). Euclidean distances were considered to measure the distances between points. We considered neighborhoods defined as discs (for the planar instances) or 3-dimensional Euclidean balls (for the $3$-dimensional instances) with randomly generated radii. The sizes of the neighborhoods were generated under four scenarios:
\begin{description}
\item[Scenario 1.] Radii generated in $[0,5]$.
\item[Scenario 2.] Radii generated in $[5,10]$. 
\item[Scenario 3.] Radii generated in $[10,15]$. 
\item[Scenario 4.] Radii generated in $[15,20]$.
\end{description}
In Figure \ref{fig:radius} we show, for one of our $8$-points instances, the neighborhoods for each the four scenarios. Note that Scenario 1 slightly differs from the DOMP while Scenario 4 is closer to the continuous problem. As will be observed from our experiments, the computational difficulty of solving problems with larger radii is higher than the one of those with small radii.

\begin{figure}[h]
\fbox{\begin{scaletikzpicturetowidth}{0.47\textwidth}
\begin{tikzpicture}[scale=\tikzscale]

\coordinate(X1) at (0,5);
\coordinate(X2) at (1,1);
\coordinate(X3) at (1,6);
\coordinate(X4) at (3,4);
\coordinate(X5) at (5.5,3);
\coordinate(X6) at (10,4);
\coordinate(X7) at (6.5,0);
\coordinate(X8) at (8,6);

\draw[white] (5,6.5) circle (0.12);

\draw (X1) circle (0.1);
\draw (X2) circle (0.3);
\draw (X3) circle (0.2);
\draw (X4) circle (0.4);
\draw (X5) circle (0.4);
\draw (X6) circle (0.2);
\draw (X7) circle (0.3);
\draw (X8) circle (0.1);

\fill (X1) circle (1pt);
\fill (X2) circle (1pt);
\fill (X3) circle (1pt);
\fill (X4) circle (1pt);
\fill (X5) circle (1pt);
\fill (X6) circle (1pt);
\fill (X7) circle (1pt);
\fill (X8) circle (1pt);

\end{tikzpicture}
\end{scaletikzpicturetowidth}}
\fbox{\begin{scaletikzpicturetowidth}{0.47\textwidth}
\begin{tikzpicture}[scale=\tikzscale]

\coordinate(X1) at (0,5);
\coordinate(X2) at (1,1);
\coordinate(X3) at (1,6);
\coordinate(X4) at (3,4);
\coordinate(X5) at (5.5,3);
\coordinate(X6) at (10,4);
\coordinate(X7) at (6.5,0);
\coordinate(X8) at (8,6);

\draw (X1) circle (0.5);
\draw (X2) circle (0.8);
\draw (X3) circle (0.8);
\draw (X4) circle (0.6);
\draw (X5) circle (0.9);
\draw (X6) circle (0.8);
\draw (X7) circle (0.6);
\draw (X8) circle (1);

\fill (X1) circle (1pt);
\fill (X2) circle (1pt);
\fill (X3) circle (1pt);
\fill (X4) circle (1pt);
\fill (X5) circle (1pt);
\fill (X6) circle (1pt);
\fill (X7) circle (1pt);
\fill (X8) circle (1pt);
\end{tikzpicture}
\end{scaletikzpicturetowidth}}

\fbox{\begin{scaletikzpicturetowidth}{0.47\textwidth}
\begin{tikzpicture}[scale=\tikzscale]

\coordinate(X1) at (0,5);
\coordinate(X2) at (1,1);
\coordinate(X3) at (1,6);
\coordinate(X4) at (3,4);
\coordinate(X5) at (5.5,3);
\coordinate(X6) at (10,4);
\coordinate(X7) at (6.5,0);
\coordinate(X8) at (8,6);

\draw[white] (5,6.5) circle (0.9);

\draw (X1) circle (1);
\draw (X2) circle (1.3);
\draw (X3) circle (1.2);
\draw (X4) circle (1.4);
\draw (X5) circle (1.4);
\draw (X6) circle (1.2);
\draw (X7) circle (1.3);
\draw (X8) circle (1.1);

\fill (X1) circle (1pt);
\fill (X2) circle (1pt);
\fill (X3) circle (1pt);
\fill (X4) circle (1pt);
\fill (X5) circle (1pt);
\fill (X6) circle (1pt);
\fill (X7) circle (1pt);
\fill (X8) circle (1pt);
\end{tikzpicture}
\end{scaletikzpicturetowidth}}
\fbox{\begin{scaletikzpicturetowidth}{0.47\textwidth}
\begin{tikzpicture}[scale=\tikzscale]
\coordinate(X1) at (0,5);
\coordinate(X2) at (1,1);
\coordinate(X3) at (1,6);
\coordinate(X4) at (3,4);
\coordinate(X5) at (5.5,3);
\coordinate(X6) at (10,4);
\coordinate(X7) at (6.5,0);
\coordinate(X8) at (8,6);

\draw (X1) circle (1.6);
\draw (X2) circle (1.9);
\draw (X3) circle (2);
\draw (X4) circle (1.8);
\draw (X5) circle (2);
\draw (X6) circle (1.7);
\draw (X7) circle (1.5);
\draw (X8) circle (1.9);

\fill (X1) circle (1pt);
\fill (X2) circle (1pt);
\fill (X3) circle (1pt);
\fill (X4) circle (1pt);
\fill (X5) circle (1pt);
\fill (X6) circle (1pt);
\fill (X7) circle (1pt);
\fill (X8) circle (1pt);
\end{tikzpicture}
\end{scaletikzpicturetowidth}}
\caption{Shapes of the neighborhoods for the different scenarios.\label{fig:radius}}
\end{figure}
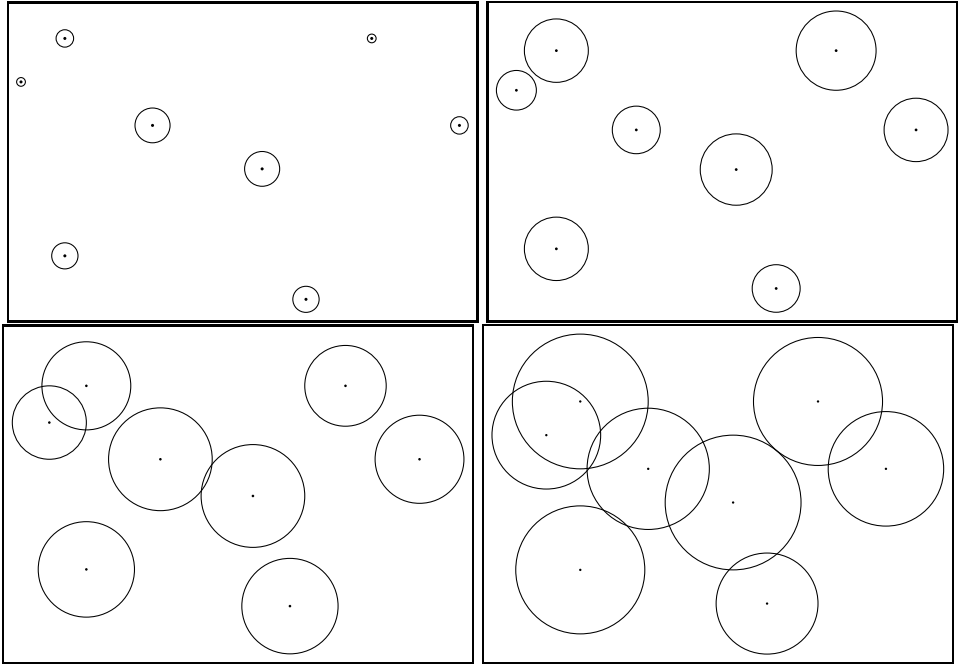
The set-up cost of each facility was assumed to be the radius of its neighborhood. It can be interpreted as the cost of covering the neighborhood (larger as $r$ increases). 

The four formulations were coded in C, and solved using Gurobi 7.01 in a Mac OSX El Capitan with an Intel Core i7 processor at 3.3 GHz and 16GB of RAM. A time limit of 1 hour was set in all the experiments.

Also, four different convex ordered median problems were solved for each of the instances:
\begin{description}
\item[$p$-Median (M):] $\lambda=(1, \ldots, 1)$.
\item[$p$-Center (C):] $\lambda=(1,0, \ldots,0)$.
\item[$p$-KCenter (K):] $\lambda=(\overbrace{1,\ldots,1}^{\lfloor\frac{n}{2}\rfloor}, 0,\ldots, 0)$.
\item[$p$-Cent-Dian$_{0.5}$ (D):] $\lambda=(1, 0.5, \ldots, 0.5)$.
\end{description}

For the small instances ($n \leq 10$) we report here only the results obtained for $n=10$ (the interested reader may check the complete result of the experiments for $n=5, \ldots, 10$ in \href{http://bit.ly/resultsDOMPN}{bit.ly/resultsDOMPN})

First, we run the continuous relaxation of the four formulations to analyze their integrality gaps. The average results are shown in Table \ref{table:IG}. We report the integrality gaps ($IG=\dfrac{z^*}{z^R}$) for each of the formulations (\eqref{domp1}, \eqref{domp2} and \eqref{domp4} (obviating \eqref{domp3} whose integrality gap coincides with the one of \eqref{domp4} by Theorem \ref{relaxations}). The table summarizes the average integrality gaps for the each of the scenarios. As remarked above, there is no order relation between $z^R_{2I}$ and $z^R_{BEP}$. We have bolfaced those values in which the average integrality gaps of \eqref{domp2} is smaller than the one for \eqref{domp4}. Note that it only happens for a few combinations of $n$, $p$, problem types and scenarios. In particular, it mostly occurs for small values of $p$ and only for Scenario 1. In Table \ref{gana2I} we show the percentage of instances (of all of those with fixed $n$ and $p$) for which $z_{2I}^R \geq z_{BEP}^R$. In the total instances, this percentage is $9.97\%$, while among the instances of Scenario 1 is $26.15\%$.

Tables \ref{table:2dtodas} and \ref{table:3dtodas} show the results for the planar and 3-dimensional problems, respectively. For each of the formulations, we provide the average CPU times (\texttt{time}), the number of nodes explored in the branch-and-bound tree (\texttt{\#nodes}) and the deviation with respect to the solution obtained at the end of the root node of the search tree (\texttt{\%gapR}).

 \input{tablesnew.tex}

  As can be observed from the results, formulations \eqref{domp3} and \eqref{domp4} are much less time consuming than \eqref{domp1} and \eqref{domp2} in all the cases. Also, the solutions obtained after exploring the root node of \eqref{domp3} and \eqref{domp4} are tighten than the rest. Consequently, the number of explored nodes to find the optimal solution or to certify optimality is higher in the two first formulations. Observe that the results are as expected since the first two formulations do not exploit the convexity of monotone ordered median problems. Observe that the \textit{sorting constraints} in the first two formulations involve binary variables while in the two last formulations no need of new binary variables are needed for this task.
  
Since \eqref{domp3} and \eqref{domp4} seems to have a similar computational behavior for the small-size instances, we have performed a series of experiments for medium-size instances to compare these two formulations. The results are shown in Table \ref{table:OT-BEP}, where now $n$ ranges in $\{20,30\}$ and $p$ in $\{2,5,10\}$. As can be observed, the performance (in terms of CPU time) of both formulation is similar, but \eqref{domp4} seems to need, in average, less CPU time to solve the problems in most of the problems, and the standard deviations (StDev) of the consuming times for \eqref{domp4} are smaller than those for \eqref{domp3}. Furthermore, we were able to solve all the instances before the 1 hour time limit, but $2.56\%$ of them by using \eqref{domp4}, while \eqref{domp3} was not able to solve $11.34\%$ of the the problems. Moreover, in all the instances, \eqref{domp4} obtained better upper bounds for the optimal value of the problems in all the instances (the deviation of the best upper bounds obtained with the OT formulation with respect to the best solution obtained with the BEP formulation is shown in column \%DevBest).

  {\small  \begin{table}[h]
  \centering
  {\tiny
    \begin{tabular}{|c|c|c|cc|cc|cc|c|}\hline
 Sc.&  $n$ & $p$ & Time$_{BEP}$ & StDev$_{BEP}$ & Time$_{OT}$ & StDev$_{OT}$ & \%NonSolved$_{BEP}$ & \%NonSolved$_{OT}$ & \%DevBest \\\hline
\multirow{6}{*}{3}     &    \multirow{3}{*}{20}     &          2    & 9.73 & 2.71 & 11.36 & 3.58 & 0\% & 0\% & 0\% \\
 &&5    & 253.35 & 18.65 & 449.32 & 28.15 & 0\% & 2.56\% & 0.01\% \\
 &&10   & 46.97 & 13.85 & 77.52 & 17.47 & 0\% & 0\%& 0\% \\\cline{2-10}
        &    \multirow{3}{*}{30}     &2    & 59.64 & 6.48 & 148.23 & 14.70 & 0\% & 0\% & 0\% \\
 &&5    & 2931.44 & 36.11 & 3099.25 & 32.91 & 75\% & 77.5\% & 1.63\% \\
 &&10   & 2861.03 & 37.34 & 3070.86 & 34.40 & 77.5\% & 80\% & 3.75\% \\\hline
   \multirow{6}{*}{4}     &    \multirow{3}{*}{20}     &2    & 26.45 & 4.40 & 30.03 & 4.95 & 0\% & 0\% & 0\% \\
 &&5    & 1865.88 & 41.15 & 1874.01 & 41.33 & 40\% & 40\% & 0.30\% \\
 &&10   & 9.51 & 2.44 & 22.13 & 6.17 & 0\% &0\% & 0\% \\\cline{2-10}
   \multirow{6}{*}{3}     &    \multirow{3}{*}{30}     &2    & 735.58 & 36.69 & 849.51 & 37.10 & 15\% & 15\% & 0.17\% \\
 &&5    & 2742.49 & 39.04 & 2836.95 & 36.91 & 75\% & 75\% & 1.15\% \\
 &&10   & 2745.59 & 39.12 & 2789.27 & 38.12 & 75\% & 75\% & 3.28\%\\\hline
           \end{tabular}}
          \caption{Comparison of \eqref{domp3} and \eqref{domp4} for instances with $n=20, 30$.\label{table:OT-BEP}}
  \end{table}  }
  
\section{Math-heuristics for the OMPN}
\label{heuristics}

In this section we describe two mathematical programming location-allocation based heuristic approaches for solving the OMPN for larger instances. Some heuristics have been proposed for solving ordered $p$-median problems (see \cite{DNHM_AOR05}). However, most of them are based on the use of ``fast'' procedures to compute the overall cost of opening/closing certain sets of facilities. Note that when the travel cost matrix is provided and a set of open facilities is obtained, one can easily evaluate, for each customer, its cheapest facility (or its second cheapest facility), and once all of them are computed, evaluate an ordered median function can be also efficiently performed. In the OMPN, even if the open facilities are known, the allocation costs depend on the final location of the facilities (which depends also on the customers allocated to each facility). Hence, the developed heuristics for the DOMP are no longer valid for the OMPN problem. We propose two alternative local search math-heuristic which allows us to solve larger instances of the problem at  smaller computational costs than the exact approaches, at the price of not warrantying the optimality of the solution.

The two heuristics procedures that we propose are part of the well-known familiy of location-allocation procedures. These type of schemes based on performing changes over a set of $p$ facilities candidates for being opened, trying to improve the incumbent objective value. For the sake of that we need to compute the overall cost of opening a given set of facilities $J \subseteq \{1, \ldots, n\}$ with $|J|=p$. Observe that if a set of open facilities is known, we have to compute the minimum (ordered weighted) cost of allocating the customers to those facilities. As mentioned above the computation of such a cost will involve the computation of the allocation customer-open facility and also the position of the open facilities inside their neighborhoods. Although different formulations can be used for such a task, we will use the one based on  formulation \eqref{domp4}.

\begin{propo}
Let $J \subset N=\{1, \ldots, n\}$ with $|J|=p$. Then, the cost of using $J$ as open facilities of the OMPN problem can be computed by solving the following mixed integer non linear programming problem:

\begin{align}
c(J) := \min& \dsum_{i \in N\backslash J} u_i + \dsum_{k \in N \backslash J} v_k + \dsum_{j \in J} f_j,\label{cost:J}\tag{${\rm ALLOC}(J)$}\\
\mbox{s.t. } & u_i + v_k \geq \lambda_k z_i, \forall i, k \in N \backslash J,\nonumber\\
& z_i \geq d_{ij} - \widehat{D}_{ij} (1-x_{ij}), \forall i \in N \backslash J, j \in J,\nonumber\\
& d_{ij} \geq \|a_i - \bar a_j\|, \forall i \in N \backslash J, j \in J,\nonumber\\
& r_j \geq \|a_j - \bar a_j\|, \forall j \in J,\nonumber\\
& \dsum_{j \in J} x_{ij} = 1, \forall i \in N\backslash J,\nonumber\\
& x_{ij} \in \{0,1\}, \forall i \in N \backslash J, j \in J,\nonumber\\
&z_i \geq 0,\forall  i \in N \backslash J.\nonumber
\end{align}
\end{propo}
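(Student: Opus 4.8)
The plan is to show that the optimal value of \eqref{cost:J}, which is denoted $c(J)$, equals the cost of opening exactly the facilities of $J$ in \eqref{dompn0}, by proving both inequalities. Since $\sum_{j\in J}f_j$ is a constant once $J$ is fixed and occurs identically in both objectives, everything reduces to matching the ordered-median term; the two directions then amount to showing that every feasible point of \eqref{cost:J} reconstructs an OMPN solution with open set $J$ of the same cost, and conversely.

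First I would invoke Proposition~\ref{prop:1}(1): with open set $J$ there is an optimal OMPN solution in which the $p$ smallest travel distances vanish, attained by the customers sitting at the open-facility sites $a_j$, $j\in J$ (each $a_j\in\N(a_j)$). Hence only the $n-p$ customers of $N\setminus J$ carry positive distances $D_i=\min_{j\in J}\|a_i-\bar a_j\|$. I would then check that the last four constraint blocks of \eqref{cost:J} describe exactly the admissible configurations of these objects: $r_j\ge\|a_j-\bar a_j\|$ keeps each open facility in its neighbourhood, $\sum_{j\in J}x_{ij}=1$ together with $d_{ij}\ge\|a_i-\bar a_j\|$ assigns every customer of $N\setminus J$ to an open facility, and $z_i\ge d_{ij}-\widehat{D}_{ij}(1-x_{ij})$ forces $z_i\ge\|a_i-\bar a_{j(i)}\|$, where $j(i)$ is the facility serving $i$. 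Because the ordered median function with weights $\lambda_1\ge\cdots\ge\lambda_n\ge0$ is nondecreasing in each argument, at any optimum of \eqref{cost:J} each $z_i$ is driven down to $\min_{j\in J}\|a_i-\bar a_j\|$, so the $z$-variables coincide with the OMPN distances $D_i$, $i\in N\setminus J$.

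It then remains to handle the ordered-median evaluation. Sorting the length-$n$ distance vector, the $p$ zeros land in the lowest-weighted positions because $\lambda$ is nonincreasing and nonnegative, so $\sum_{i=1}^n\lambda_i D_{(i)}$ equals the ordered-median value of the reduced vector $(z_i)_{i\in N\setminus J}$ re-weighted on $N\setminus J$. I would then reuse the reformulation behind \eqref{domp4}: this reduced value is the optimum of an assignment problem (choose the permutation of the $z_i$'s maximising the weighted sum), whose constraint matrix is totally unimodular, hence it equals the optimum of its linear-programming dual $\min \sum_k u_k+\sum_i v_i$ subject to $u_i+v_k\ge\lambda_k z_i$ for $i,k\in N\setminus J$. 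Plugging this dual back and minimising jointly over $(u,v,x,\bar a,d,z)$ produces precisely \eqref{cost:J}, which yields both inequalities and hence the claim.

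The step I expect to be the main obstacle is the ordered-median reduction: one must argue carefully, from Proposition~\ref{prop:1}(1) and the monotonicity of $\lambda$, that the $p$ forced zeros genuinely absorb the $p$ smallest weights and therefore decouple from the remaining $n-p$ distances, so that dropping the customers of $J$ from the objective and re-weighting on the index set $N\setminus J$ loses nothing (this is also what makes the reconstruction of an OMPN solution from a point of \eqref{cost:J} cost-preserving). Once this bookkeeping is settled, the rest is the same exact, binary-variable-free (for the sorting) linearisation already validated for \eqref{domp4}, applied verbatim on $N\setminus J$, together with the standard total-unimodularity argument for the assignment dual.
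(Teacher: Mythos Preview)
Your argument is correct but considerably more elaborate than the paper's. The paper's proof is a single sentence: \eqref{cost:J} is nothing but \eqref{domp4} after fixing $x_{jj}=1$ for $j\in J$ and $x_{jj}=0$ otherwise, and simplifying. Since \eqref{domp4} has already been validated as a formulation for OMPN, no further justification is offered. You instead rebuild the whole BEP machinery on the reduced index set $N\setminus J$: you invoke Proposition~\ref{prop:1}(1) to strip out the $p$ zero distances, verify that the constraint blocks encode the neighbourhood and allocation structure, argue monotonicity drives each $z_i$ to the true closest-facility distance, and then rerun the assignment/total-unimodularity/LP-duality derivation that underpins \eqref{domp4}. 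What this buys you is a self-contained proof that does not presuppose the correctness of \eqref{domp4}, and it makes explicit the step where the $p$ zeros absorb the $p$ smallest $\lambda$-weights (which the paper's one-liner leaves implicit). What the paper's route buys is brevity: once \eqref{domp4} is in hand, the proposition really is just variable fixing, and there is no need to reprove anything.
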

\begin{proof}
The proof easily follows noting that \eqref{cost:J} is nothing but the simplification of \eqref{domp4} when the values of $x_{jj}$ are known and fixed to $x_{jj}=1$ if $j \in J$ and $x_{jj}=0$, otherwise.
\end{proof}

For each $J$, \eqref{cost:J} can be reformulated as a mixed integer second order cone constraint problem with $(n-p)p$ binary variables (instead of the $n^2$ in \eqref{domp4}. Furthermore, a variable fixing strategy can be applied to \eqref{cost:J} in order to reduce the number of binary variables of the problem.

\begin{prop}
Let $J \subseteq N$ with $|J|=p$, $i\in N\backslash J$, $j\in J$ and $x^*\in\X$ optimal allocation solutions of \eqref{cost:J}.
\begin{enumerate}
\item\label{1}If $\exists k \in J\backslash\{j\}$ such that $\widehat{D}_{ik} < \widehat{d}_{ij}$, then $x^*_{ij}=0$.
\item If $\min_{k \neq j} \widehat{d}_{ik} > \widehat{D}_{ij}$, then $x^*_{ij}=1$.
\item If $\{j^\prime \in J: \widehat{D}_{ik} \geq \widehat{d}_{ij^\prime}, \forall k \neq j^\prime\}= \{j\}$, then $x^*_{ij}=1$.
\end{enumerate}
\end{prop}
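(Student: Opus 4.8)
The plan is to establish the three fixing rules by an exchange (re-allocation) argument; the only one needing real work is rule~\eqref{1}, and rules (2) and (3) will then follow from \eqref{1} together with the assignment constraint $\sum_{j'\in J}x_{ij'}=1$. I would use three ingredients, each either immediate or already available: first, the bounds $\widehat{d}_{ij}\le\|a_i-\bar a_j\|\le\widehat{D}_{ij}$ valid for every $\bar a_j\in\N_j$, straight from the definitions of the lower and upper distance bounds (where, consistently with its stated role as a lower bound, $\widehat{d}_{ij}=\min\{\|a_i-\bar a_j\|:\bar a_j\in\N_j\}$); second, monotonicity of the ordered median value --- since $\lambda_1\ge\cdots\ge\lambda_n\ge 0$, if $z\le z'$ componentwise then $z_{(\ell)}\le z'_{(\ell)}$ for all $\ell$ and hence $\sum_\ell\lambda_\ell z_{(\ell)}\le\sum_\ell\lambda_\ell z'_{(\ell)}$; and third, the dual (assignment) representation used to derive \eqref{domp4}, namely that for fixed $z$ the value $\sum_\ell\lambda_\ell z_{(\ell)}$ equals the optimum of $\min\{\sum_k u_k+\sum_i v_i:\ u_i+v_k\ge\lambda_k z_i\ \forall i,k\}$. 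As usual for variable fixing, I would read each conclusion as ``some optimal solution of \eqref{cost:J} attains the prescribed value of $x_{ij}$''.

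For rule~\eqref{1} I would take an optimal solution $(x^*,\bar a^*,d^*,z^*,u^*,v^*)$ of \eqref{cost:J} --- where we may assume $d^*_{i\ell}=\|a_i-\bar a^*_\ell\|$ for all $\ell$, since $d$ occurs only in lower bounds --- suppose $x^*_{ij}=1$, and pick $k\in J\backslash\{j\}$ with $\widehat{D}_{ik}<\widehat{d}_{ij}$. I then re-allocate client $i$ from $j$ to $k$, keeping $\bar a^*$ and every other assignment fixed: set $\bar x_{ij}=0$, $\bar x_{ik}=1$, $\bar d_{ik}=\|a_i-\bar a^*_k\|$, $\bar z_i=\|a_i-\bar a^*_k\|$, and $\bar z_\ell=z^*_\ell$ for $\ell\neq i$. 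For feasibility: the assignment row for $i$ still holds; the lower bounds on $z_i$ coming from facilities $\ell\neq k$ have right-hand side $\le 0$ because $\bar x_{i\ell}=0$ and $d^*_{i\ell}\le\widehat{D}_{i\ell}$; the bound from $k$ is tight. The decisive inequality is $\bar z_i=\|a_i-\bar a^*_k\|\le\widehat{D}_{ik}<\widehat{d}_{ij}\le\|a_i-\bar a^*_j\|=d^*_{ij}\le z^*_i$, giving $\bar z\le z^*$ componentwise; by the second and third ingredients I can re-optimize to $\bar u,\bar v$ feasible with $\sum\bar u+\sum\bar v=\sum_\ell\lambda_\ell\bar z_{(\ell)}\le\sum_\ell\lambda_\ell z^*_{(\ell)}=\sum u^*+\sum v^*$. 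Since $J$ is unchanged the set-up term is unchanged, so the new point is feasible for \eqref{cost:J} with no larger objective, hence optimal, and it has $\bar x_{ij}=0$. Carrying out the same move for each client independently (the reallocations cannot interact because $\bar a^*$ is kept fixed) shows all such fixings can be imposed at once.

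Rules (2) and (3) then follow quickly. For (2): $\min_{k\neq j}\widehat{d}_{ik}>\widehat{D}_{ij}$ gives $\widehat{D}_{ij}<\widehat{d}_{ik}$ for every $k\in J\backslash\{j\}$, so \eqref{1} applied with $j$ and $k$ swapped fixes $x^*_{ik}=0$ for all such $k$, and $\sum_{j'\in J}x_{ij'}=1$ then forces $x^*_{ij}=1$. For (3): \eqref{1} says exactly that $x^*_{ij'}$ may be positive only when $\widehat{D}_{ik}\ge\widehat{d}_{ij'}$ for all $k\neq j'$, i.e.\ only for $j'$ in the set $\{j'\in J:\ \widehat{D}_{ik}\ge\widehat{d}_{ij'}\ \forall k\neq j'\}$; if that set is $\{j\}$, every other facility is fixed to zero and the assignment constraint again yields $x^*_{ij}=1$.

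I expect the crux to be the bookkeeping in rule~\eqref{1}: checking that lowering the single coordinate $z_i$ keeps every constraint of \eqref{cost:J} satisfied and --- the real point --- that the auxiliary variables $u,v$ can be re-optimized without increasing the objective, which is exactly where ordered median monotonicity and the dual representation enter, together with the fact that the set-up cost does not change because $j$ and $k$ are both already open. Everything after \eqref{1} is a one-line consequence of it and the assignment constraint. I would also flag the variable-fixing reading explicitly, since when the relevant $z_i$ lies in a position carrying zero $\lambda$-weight the reallocation need not strictly improve the objective, so some optimal solutions may keep $x^*_{ij}$ at the non-prescribed value; what is established is that at least one optimal solution respects each fixing.
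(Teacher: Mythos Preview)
Your argument is correct and follows the same reassignment idea as the paper: for (1), if $i$ is allocated to $j$ while a strictly closer open facility $k$ is available, switching $i$ to $k$ yields a feasible solution with no worse ordered-median cost; (2) and (3) then follow from (1) together with the assignment constraint $\sum_{j'\in J}x_{ij'}=1$. The paper's own proof is terser --- it simply asserts that failing closest assignment ``contradicts optimality'' without spelling out the exchange or the role of monotonicity of the $\lambda$-weights, and it argues (2) by a direct contradiction rather than by reducing to (1) --- so your explicit bookkeeping and your caveat that only \emph{some} optimal solution need respect the fixings are welcome refinements rather than a genuinely different route.
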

\begin{proof}$ $
\begin{enumerate}
\item Let us assume that $x^*_{ij}=1$. Then, $d_{ij}^* = \|a_i - \bar a^*_j\| \geq \widehat{d}_{ij}$. By hypothesis there exists $k \in J$ ($k \neq j$) with $\widehat{D}_{ik} < \widehat{d}_{ij}$. Hence, $d_{ij}^* > \widehat{D}_{ik} \geq d_{ik} = \|a_i - \bar a_k^*\|$, so $d_{ij}^* \neq \min_{j^\prime \in J} \|a_i - \bar a_k\|$ contradicting the optimality of the solution.
\item If $x^*_{ij}=0$, then, there exists $k \in J$, $k\neq j$ such that $d_{ij}^* \geq d_{ik}^* > \widehat{D}_{ij} \geq d_{ij}^*$. Thus, $x^*_{ij}=1$.
\item If applying \ref{1}, all the facilities except $j$ must verify $x_{ij^\prime}^*=0$, then the unique choice for allocating $i$ is $j$. 
\end{enumerate}
\end{proof}

As we will show in our computational experiments, the above strategies for fixing variables allows us to fix an average of $80\%$ of the binary variables in the test problems.

Using the above-described formulation, we implemented two different heuristic algorithms. Both algorithms will move through different feasible solutions in order to improve an initial feasible solution. This initial solution is constructed by either solving the standard DOMP problem with a set of weights based on the distances between centers of the neighborhoods (a convex combination of $\widehat{D}_{ij}$ and $\widehat{d}_{ij}$), or solving the OMPN problem for simpler neighborhoods (as polyhedral neighborhoods) and polyhedral distances (which may require less computational effort than general $\ell_\tau$-norm based metrics or neighborhoods). Hence, we consider that an initial solution $x^0 \in \X$ is known.

\subsection{Math-heuristic Algorithm 1}

Given a feasible solution $\bar x \in \X$, the first algorithm searches, for each facility $j_0$ in $J$, the best replacement by a facility in $N \backslash J$. Two different options are possible here. First, to construct the new set of open facilities $J^\prime= J \cup \{i\} \backslash \{j_0\}$ for each $i \in N\backslash J$, solve \eqref{cost:J} for such a $J^\prime$ and keep the best possible change for $j_0$. The second option is to solve a single mixed integer non linear programming problem  which decides (through the binary variable $\xi_i$, whether the non-opened facility $i$ is interchanged by $j_0$ to obtain the best improvement:

\begin{align}
\min& \dsum_{i \in N} u_i + \dsum_{k \in N} v_k + \dsum_{j\in J \backslash\{j_0\}} f_j + \dsum_{i \in N \backslash J} f_i \xi_i,\label{r:j}\tag{${\rm BestRepl}(j)$}\\
\mbox{s.t. } & u_i + v_k \geq \lambda_k z_i, \forall i, k \in N \backslash J,\nonumber\\
& z_i \geq d_{ij} - \widehat{D}_{ij} (1-x_{ij}), \forall i \in \{j_0\} \cup N \backslash J, j \in J\backslash\{j_0\},\label{b:1}\\
& z_i \geq d_{ij} - \widehat{D}_{ij} (2-x_{ij}-\xi_j), \forall i \in \{j_0\} \cup N \backslash J, j \in J\backslash\{j_0\},\label{b:2}\\
& d_{ij} \geq \|a_i - \bar a_j\|, \forall i \in \{j_0\} \cup N \backslash J, j \neq j_0\in J,\nonumber\\
& r_j \geq \|a_j - \bar a_j\|, \forall j\neq \{j_0\}  \in N,\nonumber\\
& \dsum_{j \in N \backslash\{j_0\}} x_{ij} = 1, \forall i \in \{j_0\} \cup N \backslash J,\nonumber\\
& x_{ij} \leq \xi_j, \forall j \in N\backslash J,\label{b:3}\\
& \dsum_{i \in N\backslash J} \xi_i = 1, \label{b:4}\\
& x_{ij}, \xi_i \in \{0,1\}, \forall i \in \{j_0\} \cup N \backslash J, j \in N \backslash\{j_0\},\nonumber\\
&z_i \geq 0,\forall  i \in N \backslash J.\nonumber
\end{align}

Note that constraints \eqref{b:1} are the linearization of the bilinear terms as in the previus formulations, but obviating the facility that wants to be replaced ($j_0$). For the candidates to replace $j_0$, constraints \eqref{b:2} assures that in case $j$ is chosen for the replacement, and a customer $i$ is allocated to $j$, then the travel cost for $i$ is $d_{ij}$, otherwise, the constraint is redundant. With respect to the variables $\xi$ that model the selection of the facility to be swapped with $j_0$, \eqref{b:3} ensures that unchosen facilities cannot serve any customer. \eqref{b:4} assures that a single choice for $j_0$ is possible. Although \eqref{r:j} is similar to \eqref{domp4}, the number of binary variables in the problemiss $(n-p)n$ instead of $n^2$.

In our experiments, we have checked that solving \eqref{r:j} required more CPU time than solving the $n-p$ problems in the form \eqref{cost:J}, although for problems in which $n-p \ll n$ (i.e., when $p$ is large), the \textit{compact} formulation may consume less CPU time than loading and solving $n-p$ problems in the shape of \eqref{cost:J}.

In what follows we describe our math-heuristic procedure, whose pseudocode is shown in Algorithm \ref{alg:heuristic}. Given an initial set of $p$ open facilities, it iterates by interchanging open facilities with other potential facilities trying to improve the best upper bound. At each iteration an open facility is selected to be replaced and the best replacement is chosen. After checking all the open facilities, if an improvement is found when compared to the best upper bound, the latest and the set of open facilities are updated. The procedure repeats the same scheme until a termination criterion is fulfilled. In our case, two stopping criteria are considered: maximum number of iterations and maximum number of iterations without improvement in the solution. In order to reduce the computation times required for solving \eqref{cost:J} or \eqref{r:j}, we consider a randomized version of the algorithm in which instead of finding best replacements for all the open facilities, a random one is selected at that phase of the approach. 

\begin{algorithm}[H]
\SetKwInOut{Input}{Initialization}\SetKwInOut{Output}{output}

 \Input{Let $\widehat{J} \subset N$ with $|\widehat{J}|=p$ an initial set of open facilities and $UB=c(\widehat{J})$.}

 \While{$it<it_{max}$}{
 \For{$j_0\in \widehat{J}$}{
 Find the best replacement for $j$ (by solving \eqref{cost:J} for $J=\widehat{J} \cup \{i\} \backslash \{j\}$ or \eqref{r:j}: $c_{j_0} = c(J)$.
}
 
 \If{$c_j <UB$}{
 Update $UB=c_j$\\
 $\widehat{J}=J$\\
 BREAK}

Increase $it$.
}
 \caption{Math-Heuristic 1 for solving OMPN.\label{alg:heuristic}}
\end{algorithm}

A crucial point of local search heuristics is the quality of an initial feasible solution (in the $x$-variables). We compute the solution of the DOMP problem  but using at costs between facilities $i$ and $j$ a convex combination of the lower and upper bounds $\widehat{d}_{ij}$ and $\widehat{D}_{ij}$ which provide good results in practice.

\subsection{Math-Heuristic Algorithm 2}

The second heuristic is based on alternating the location and allocation decisions. Initially,  a DOMP is solved by fixing $\bar a = a$, and precomputing the distances between the facilities. Once a solution is obtained, the optimal open facilities are kept and given as input to \eqref{cost:J}. Then, the variables $\bar a$ are updated with the obtained solution and the process is repeated until stabilization. In order to escape from local optima, the scheme is applied again but forbidding the use of one of the facilities opened in the first stage. The process iterates until no improvements are found.

The pseudocode for this approach is shown  in Algorithm \ref{alg:h2}.

\begin{algorithm}[H]
\SetKwInOut{Input}{Initialization}\SetKwInOut{Output}{output}

\Input{$\bar a = a$}

\While{$|f_1 - f_2| > \varepsilon$}{
$\bullet$ Solve DOMP for $d_{ij} = \| a_i - \bar a_j\|$. Update $J = \{j\in N: x^*_{jj} =1\}$ and its objective value $f_1$.\par
$\bullet$ Solve \eqref{cost:J} and update $\bar a$.
}

\For{$j_0 \in J$}{
Initialize $\bar a = a$.

\While{$|f_1 - f_2| > \varepsilon$}{
$\bullet$ Solve DOMP for $d_{ij} = \| a_i - \bar a_j\|$ forbiding opeing $j_0$. Update $J = \{j\in N: x^*_{jj} =1\}$ and its objective value $f_1$.\par
$\bullet$ Solve \eqref{cost:J} and update $\bar a$ and its objective value $f_2$.
}
}

 \caption{Math-Heuristic 2 for solving OMPN.\label{alg:h2}}
\end{algorithm}

\section{Experiments}

In order to test the performance of the math-heuristic approaches, we have run some experiments over the real dataset instance of 2-dimensional coordinates (normalized longitude and latitude) of geographical centers of 49 states of the Unites States (we exclude Alaska, Hawaii and those outside Northamerica). We considered as neighborhoods Euclidean disks with radii based on the areas of each state. For each state (indexed by $j$), the area (in km$^2$), $A_j$, was obtained and we construct the radius $r^0_j = \sqrt{\frac{A_j}{\pi}}$.  The coordinates and the discs built applying this strategy are drawn in Figure \ref{us}. Then, three different scenarios were considered:
\begin{description}
\item[S1]: $r_j = r^0_j$, for $j=1, \ldots 49$.
\item[S2]: $r_j = 2\times r^0_j$, for $j=1, \ldots 49$.
\item[S3]: $r_j = 3\times r^0_j$, for $j=1, \ldots 49$.
\end{description}

The interested reader may download the datasets at \url{http://bit.ly/datasetUS}.

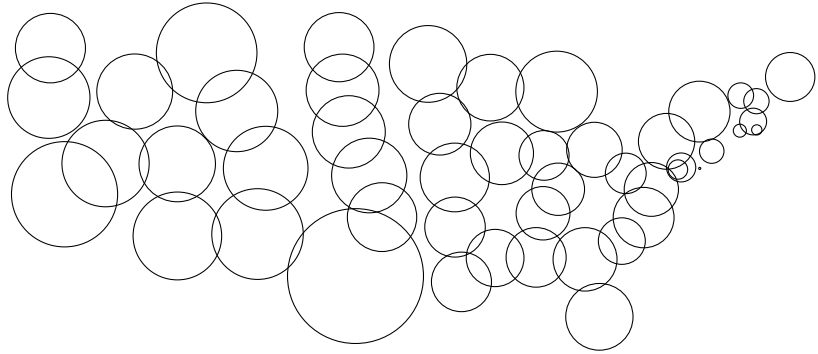
\begin{figure}[h]
\begin{center}
\begin{tikzpicture}[scale=0.19]

\coordinate (X1) at (33.7296,32.7794);
\coordinate (X2) at (8.8981,34.2744);
\coordinate (X3) at (28.1157,34.8938);
\coordinate (X4) at (1.0887,37.1841);
\coordinate (X5) at (15.0105,38.9972);
\coordinate (X6) at (43.5436,38.9101);
\coordinate (X7) at (47.831,41.6219);
\coordinate (X8) at (45.0533,38.9896);
\coordinate (X9) at (38.1086,28.6305);
\coordinate (X10) at (37.1157,32.6415);
\coordinate (X11) at (5.9453,44.3509);
\coordinate (X12) at (31.3618,40.0417);
\coordinate (X13) at (34.2767,39.8942);
\coordinate (X14) at (27.0623,42.0751);
\coordinate (X15) at (22.1779,38.4937);
\coordinate (X16) at (35.2562,37.5347);
\coordinate (X17) at (28.5615,31.0689);
\coordinate (X18) at (51.3155,45.3695);
\coordinate (X19) at (43.7674,39.055);
\coordinate (X20) at (48.75,42.2596);
\coordinate (X21) at (35.1481,44.3467);
\coordinate (X22) at (26.253,46.2807);
\coordinate (X23) at (30.8905,32.7364);
\coordinate (X24) at (28.1003,38.3566);
\coordinate (X25) at (10.925,47.0527);
\coordinate (X26) at (20.7632,41.5378);
\coordinate (X27) at (3.9271,39.3289);
\coordinate (X28) at (48.9772,43.6805);
\coordinate (X29) at (45.8855,40.1907);
\coordinate (X30) at (14.4457,34.4071);
\coordinate (X31) at (45.0315,42.9538);
\coordinate (X32) at (41.1706,35.5557);
\coordinate (X33) at (20.0924,47.4501);
\coordinate (X34) at (37.7646,40.2862);
\coordinate (X35) at (23.064,35.5889);
\coordinate (X36) at (0,43.9336);
\coordinate (X37) at (42.7587,40.8781);
\coordinate (X38) at (49.0021,41.6762);
\coordinate (X39) at (39.6619,33.9169);
\coordinate (X40) at (20.332,44.4443);
\coordinate (X41) at (34.2078,35.858);
\coordinate (X42) at (21.2271,31.4757);
\coordinate (X43) at (8.888,39.3055);
\coordinate (X44) at (47.8925,44.0687);
\coordinate (X45) at (41.7046,37.5215);
\coordinate (X46) at (0.1111,47.3826);
\coordinate (X47) at (39.9356,38.6409);
\coordinate (X48) at (30.5642,44.6243);
\coordinate (X49) at (13.0071,42.9957);

\draw (X1) circle (2.078845);
\draw (X2) circle (3.065549);
\draw (X3) circle (2.093835);
\draw (X4) circle (3.673593);
\draw (X5) circle (2.929448);
\draw (X6) circle (0.676016);
\draw (X7) circle (0.452971);
\draw (X8) circle (0.075061);
\draw (X9) circle (2.328347);
\draw (X10) circle (2.213393);
\draw (X11) circle (2.624804);
\draw (X12) circle (2.18506);
\draw (X13) circle (1.732769);
\draw (X14) circle (2.153889);
\draw (X15) circle (2.604455);
\draw (X16) circle (1.825186);
\draw (X17) circle (2.078018);
\draw (X18) circle (1.707855);
\draw (X19) circle (1.011317);
\draw (X20) circle (0.932809);
\draw (X21) circle (2.823694);
\draw (X22) circle (2.677155);
\draw (X23) circle (1.998203);
\draw (X24) circle (2.397241);
\draw (X25) circle (3.481699);
\draw (X26) circle (2.525213);
\draw (X27) circle (3.019231);
\draw (X28) circle (0.877927);
\draw (X29) circle (0.847994);
\draw (X30) circle (3.166089);
\draw (X31) circle (2.12076);
\draw (X32) circle (2.106408);
\draw (X33) circle (2.41423);
\draw (X34) circle (1.922372);
\draw (X35) circle (2.400539);
\draw (X36) circle (2.847895);
\draw (X37) circle (1.948538);
\draw (X38) circle (0.356869);
\draw (X39) circle (1.624758);
\draw (X40) circle (2.521423);
\draw (X41) circle (1.863987);
\draw (X42) circle (4.7057);
\draw (X43) circle (2.645574);
\draw (X44) circle (0.890383);
\draw (X45) circle (1.877887);
\draw (X46) circle (2.424447);
\draw (X47) circle (1.41336);
\draw (X48) circle (2.323715);
\draw (X49) circle (2.839701);
\end{tikzpicture}
\end{center}
\caption{Basic neighborhoods for the 49 US states (radii $r^0$). \label{us}}
\end{figure}

We implemented in Gurobi under the C API the two math-heuristic approaches and we compare the running times and the best values obtained with these procedures and those obtained with the exact \eqref{domp4} formulation (with a time limit of 1 hour). We solved the $p$-Median (M), $p$-Center (C), $p$-$25$-center (K) and $p$-Centdian (D) with $p \in \{2, 5, 10\}$. The results are reported in Table \ref{res:heuristics}. In such a table, the first column indicates the scenario ($1$, $2$ or $3$), the second column (Pr.) shows the problem type and the third column indicates the number of facilities to be open, $p$. The values of the solutions obtained by using the different aproaches as well as their CPU running times (in seconds) are reported: 
\begin{itemize}
\item Initial solution obtained by solving the nominal DOMP problem and solving \eqref{cost:J} for the obtained open facilities: \texttt{H0} and \texttt{t0}.
\item Best solution obtained by the math-heuristic approach 1: \texttt{H1} and \texttt{t1}.
\item Best solution obtained by the math-heuristic approach 2: \texttt{H2} and \texttt{t2}.
\item Best solution obtained by exact formulation \eqref{domp4} within the time limit: \texttt{BEP} and \texttt{tBEP}.
\end{itemize}
We also report in the 12th column (\texttt{\%VarFixed}) the average number of binary variables fixed in the first heuristic, and the pertentage deviations of the obtained solutions with respect to the best solution found with the exact formulation within the time limit: \texttt{G1}, \texttt{G2} and \texttt{G0} for the first heuristic, the second heuristic and the initial solution, respectively.

\begin{landscape}
\begin{table}[h]
{\small \centering
    \begin{tabular}{|c|c|c|cccc|cccc|c|ccc|}\hline
   \texttt{Sc.} &  \texttt{$p$} &   \texttt{Pr.}  & \texttt{H0} &  \texttt{H1} &  \texttt{H2} &  \texttt{BEP} &  \texttt{t0} &  \texttt{t1} &  \texttt{t2} &  \texttt{tBEP} &  \texttt{\%VarFixed} &  \texttt{G1} &  \texttt{G2} &  \texttt{G0} \\\hline
       \multirow{12}{*}{SC1}     & \multirow{4}{*}{2}      &  M      & 395.3482 & 394.8909 & 395.3482 & 394.891 & 2.5   & 31.52 & 13.28 & 32.44 & 89.15\% & 0.00\% & 0.12\% & 0.12\% \\
          &     &  C      & 19.105 & 18.0278 & 18.0278 & 18.0278 & 0.08  & 0.28  & 4.2   & 8.18  & 90.41\% & 0.00\% & 0.00\% & 5.64\% \\
         &      &  K      & 272.8855 & 270.8245 & 270.7348 & 270.7348 & 0.84  & 12.88 & 33.24 & 22.92 & 89.84\% & 0.03\% & 0.00\% & 0.79\% \\
         &      &  D      & 207.2299 & 207.2299 & 207.2299 & 207.2298 & 2.99  & 18.14 & 12.41 & 22.12 & 89.88\% & 0.00\% & 0.00\% & 0.00\% \\\cline{2-15}
         & \multirow{4}{*}{5}      &  M      & 222.3974 & 221.607 & 221.5985 & 222.6594 & 2.68  & 56.03 & 160.26 &  $>3600$  & 81.54\% & -0.47\% & -0.48\% & -0.12\% \\
         &      &  C      & 18.619 & 18.2246 & 16.2491 & 16.2491 & 0.17  & 0.35  & 9.57  & 37.59 & 84.90\% & 10.84\% & 0.00\% & 12.73\% \\
         &      &  K      & 167.6711 & 163.564 & 160.1906 & 160.1138 & 3.98  & 24.07 & 239.04 &  $>3600$  & 83.05\% & 2.11\% & 0.05\% & 4.51\% \\
         &      &  D      & 120.9466 & 120.0572 & 119.8601 & 119.7391 & 2.66  & 67.81 & 116.23 &  $>3600$  & 83.02\% & 0.27\% & 0.10\% & 1.00\% \\\cline{2-15}
         & \multirow{4}{*}{10}     &  M      & 146.2992 & 144.6164 & 144.2134 & 141.8635 & 8.4   & 145.96 & 245.51 &  $>3600$  & 85.33\% & 1.90\% & 1.63\% & 3.03\% \\
         &     &  C      & 27.0357 & 22.9692 & 19.9944 & 19.9944 & 0.19  & 3.06  & 15.9  & 55.57 & 84.83\% & 12.95\% & 0.00\% & 26.04\% \\
         &     &  K      & 118.7461 & 118.0626 & 117.7095 & 125.0503 & 6.34  & 70.23 & 1760.77 &  $>3600$  & 85.80\% & -5.92\% & -6.24\% & -5.31\% \\
         &     &  D      & 86.6926 & 85.2866 & 85.7954 & 85.5946 & 10.84 & 94.08 & 357.67 &  $>3600$  & 86.32\% & -0.36\% & 0.23\% & 1.27\% \\\hline
    \multirow{12}{*}{SC2}      & \multirow{4}{*}{2}      &  M      & 399.6586 & 395.1866 & 398.2659 & 395.1789 & 2.58  & 96.14 & 27.78 & 462.06 & 77.53\% & 0.00\% & 0.78\% & 1.12\% \\
         &      &  C      & 23.3894 & 21.8305 & 22.1818 & 21.7935 & 0.11  & 0.45  & 6.72  & 10.48 & 85.33\% & 0.17\% & 1.75\% & 6.82\% \\
         &      &  K      & 275.9869 & 274.8279 & 274.6485 & 274.2601 & 1.4   & 25.67 & 32.68 & 365.95 & 78.30\% & 0.21\% & 0.14\% & 0.63\% \\
         &      &  D      & 211.7095 & 209.7885 & 211.8238 & 209.7885 & 2.59  & 138.32 & 56.4  & 262.4 & 75.94\% & 0.00\% & 0.96\% & 0.91\% \\\cline{2-15}
         & \multirow{4}{*}{5}      &  M      & 228.6722 & 221.514 & 226.7708 & 223.3972 & 637.69 &  $>3600$  & 689.41 &  $>3600$  & 67.96\% & -0.85\% & 1.49\% & 2.31\% \\
         &      &  C      & 29.0877 & 23.0926 & 22.0907 & 21.5931 & 0.55  & 1.68  & 15.67 & 46.04 & 71.11\% & 6.49\% & 2.25\% & 25.77\% \\
         &      &  K      & 171.2099 & 168.7613 & 167.6214 & 166.7242 & 21.55 & 163.7 & 1025.78 & $>3600$  & 64.07\% & 1.21\% & 0.54\% & 2.62\% \\
         &      &  D      & 129.7708 & 125.1129 & 125.4487 & 127.2981 & 266.3 & 600.94 & 641.76 &  $>3600$  & 69.92\% & -1.75\% & -1.47\% & 1.91\% \\\cline{2-15}
         & \multirow{4}{*}{10}     &  M      & 153.013 & 153.013 & 155.285 & 151.5515 & 3600.1 &  $>3600$  & 3208.72 &  $>3600$  & 76.33\% & 0.96\% & 2.40\% & 0.96\% \\
         &     &  C      & 44.5115 & 35.0981 & 29.8835 & 29.842 & 0.66  & 16.66 & 46.47 & 53.42 & 74.23\% & 14.98\% & 0.14\% & 32.96\% \\
         &     &  K      & 132.6658 & 132.6658 & 131.9971 & 142.0924 &  $>3600$     &  $>3600$  &  $>3600$  &  $>3600$  & 76.33\% & -7.11\% & -7.65\% & -7.11\% \\
         &     &  D      & 100.474 & 100.474 & 97.6098 & 108.2469 &  $>3600$     &  $>3600$  & 3215.14 &  $>3600$  & 76.33\% & -7.74\% & -10.90\% & -7.74\% \\\hline
    \multirow{12}{*}{SC3}      & \multirow{4}{*}{2}      &  M      & 404.466 & 394.9373 & 398.7388 & 395.7311 & 2.94  & 108.55 & 49.53 &  $>3600$  & 62.96\% & -0.20\% & 0.75\% & 2.16\% \\
         &      &  C      & 28.0977 & 24.1051 & 24.1924 & 24.1051 & 0.11  & 0.66  & 6.03  & 10.05 & 74.69\% & 0.00\% & 0.36\% & 14.21\% \\
         &      &  K      & 280.5643 & 278.0229 & 278.9015 & 278.0228 & 1.42  & 32.54 & 41.81 & 1455.24 & 52.65\% & 0.00\% & 0.32\% & 0.91\% \\
         &      &  D      & 216.5167 & 210.8067 & 214.7534 & 210.6248 & 2.97  & 74.29 & 83.65 & 1488.5 & 72.21\% & 0.09\% & 1.92\% & 2.72\% \\\cline{2-15}
         & \multirow{4}{*}{5}      &  M      & 232.5595 & 223.4184 & 235.2989 & 240.8416 & 915.52 &  $>3600$  & 702.09 &  $>3600$  & 60.38\% & -7.80\% & -2.36\% & -3.56\% \\
         &      &  C      & 34.7835 & 28.4267 & 27.4501 & 26.5732 & 0.23  & 3.57  & 13.95 & 36.93 & 64.23\% & 6.52\% & 3.19\% & 23.60\% \\
         &      &  K      & 183.8955 & 171.2615 & 178.0249 & 173.4423 & 128.12 & 1912.6 & 1310.85 &  $>3600$  & 54.76\% & -1.27\% & 2.57\% & 5.68\% \\
         &      &  D      & 134.1824 & 133.0089 & 132.521 & 138.3565 & 1069.6 &  $>3600$  & 1206.69 &  $>3600$  & 57.96\% & -4.02\% & -4.40\% & -3.11\% \\\cline{2-15}
         & \multirow{4}{*}{10}     &  M      & 167.5751 & 167.5751 & 171.1057 & 177.7685 &  $>3600$     &  $>3600$  & 3208.25 &  $>3600$  & 70.41\% & -6.08\% & -3.89\% & -6.08\% \\
         &     &  C      & 50.7627 & 47.0082 & 39.3866 & 38.5431 & 1.1   & 4.02  & 25.13 & 32.2  & 70.00\% & 18.01\% & 2.14\% & 24.07\% \\
         &     &  K      & 144.9262 & 144.9262 & 146.6831 & 161.303 &  $>3600$    &  $>3600$  & 2336.69 &  $>3600$  & 70.41\% & -11.30\% & -9.97\% & -11.30\% \\
        &    &  D      & 110.2827 & 110.2827 & 110.5119 & 116.5976 &  $>3600$   &  $>3600$  & 3214.45 &  $>3600$  & 70.41\% & -5.73\% & -5.51\% & -5.73\% \\\hline
         \end{tabular}
  \label{res:heuristics}
  \caption{Results of Math-Heuristic Approaches and \eqref{domp4} in the US dataset.  \label{res:heuristics}}}
  \end{table}
  \end{landscape}
  
  One can observe from the results that, the CPU times needed to run the math-heuristic approaches are much smaller than those needed to solve the OMPN problem with the MINLP formulation. In those cases in which all the approaches were able to solve the problem before the time limit of one hour, the highest deviation with respect to the optimal solutions was $15\%$ for the first heuristic and $3.2\%$ for the second one. In those cases in which the exact approach was not able to certify optimality in one hour, the math-heuristic approaches found a better solution for the problem. In the first heuristic, we apply the fixing variables strategy each time \eqref{cost:J} is solved. The average percentage of binary variables that are fixed with this strategy, is at least $84\%$ for scenario SC1, $75\%$ for SC2 and $52\%$ for SC3. Observe also that the initial solution based on fixing the open facilities to the solution of the DOMP problem and then compute the location on the neighborhoods and the allocation of the customers according to these positions, is in some case far of being a close-to-optimal choice, with percentage deviations of $33\%$ in some cases.
  
  Note that the two math-heuristic approaches are still very time consuming.  One may not forget that both proposed approaches are based on solving mixed integer non linear programming problems which are known to be NP-hard. The advantage of the two approaches is that they provided good quality solutions at the first iterations, which are competitive with the exact solutions (in terms of gap).

  In Figure \ref{fig:solC5} we show the best solutions obtained for the test problem for $p=5$ under the center objective function for scenario SC1. The initial solution for this problem is drawn in Figure \ref{fig:solC05} (the solutions for $p=2, 5, 10$ can be found in \href{http://bit.ly/resultsDOMPN}{bit.ly/resultsDOMPN}). The reader can observe that \textit{small} modification of the coordinates of the potential facilities (through neighborhoods) may produce different location-allocation solutions.

 \begin{figure}[h]
\begin{center}
\fbox{\begin{tikzpicture}[scale=0.18]
   
\coordinate (X1) at (33.7296,32.7794);
\coordinate (X2) at (8.8981,34.2744);
\coordinate (X3) at (28.1157,34.8938);
\coordinate (X4) at (1.0887,37.1841);
\coordinate (X5) at (15.0105,38.9972);
\coordinate (X6) at (43.5436,38.9101);
\coordinate (X7) at (47.831,41.6219);
\coordinate (X8) at (45.0533,38.9896);
\coordinate (X9) at (38.1086,28.6305);
\coordinate (X10) at (37.1157,32.6415);
\coordinate (X11) at (5.9453,44.3509);
\coordinate (X12) at (31.3618,40.0417);
\coordinate (X13) at (34.2767,39.8942);
\coordinate (X14) at (27.0623,42.0751);
\coordinate (X15) at (22.1779,38.4937);
\coordinate (X16) at (35.2562,37.5347);
\coordinate (X17) at (28.5615,31.0689);
\coordinate (X18) at (51.3155,45.3695);
\coordinate (X19) at (43.7674,39.055);
\coordinate (X20) at (48.75,42.2596);
\coordinate (X21) at (35.1481,44.3467);
\coordinate (X22) at (26.253,46.2807);
\coordinate (X23) at (30.8905,32.7364);
\coordinate (X24) at (28.1003,38.3566);
\coordinate (X25) at (10.925,47.0527);
\coordinate (X26) at (20.7632,41.5378);
\coordinate (X27) at (3.9271,39.3289);
\coordinate (X28) at (48.9772,43.6805);
\coordinate (X29) at (45.8855,40.1907);
\coordinate (X30) at (14.4457,34.4071);
\coordinate (X31) at (45.0315,42.9538);
\coordinate (X32) at (41.1706,35.5557);
\coordinate (X33) at (20.0924,47.4501);
\coordinate (X34) at (37.7646,40.2862);
\coordinate (X35) at (23.064,35.5889);
\coordinate (X36) at (0,43.9336);
\coordinate (X37) at (42.7587,40.8781);
\coordinate (X38) at (49.0021,41.6762);
\coordinate (X39) at (39.6619,33.9169);
\coordinate (X40) at (20.332,44.4443);
\coordinate (X41) at (34.2078,35.858);
\coordinate (X42) at (21.2271,31.4757);
\coordinate (X43) at (8.888,39.3055);
\coordinate (X44) at (47.8925,44.0687);
\coordinate (X45) at (41.7046,37.5215);
\coordinate (X46) at (0.1111,47.3826);
\coordinate (X47) at (39.9356,38.6409);
\coordinate (X48) at (30.5642,44.6243);
\coordinate (X49) at (13.0071,42.9957);

\draw (X8) circle (0.075061);
\coordinate (Y8) at (44.98868,39.02092);
\draw (X11) circle (2.624804);
\coordinate (Y11) at (7.082827,42.5325);
\draw (X16) circle (1.825186);
\coordinate (Y16) at (34.33381,36.62618);
\draw (X26) circle (2.525213);
\coordinate (Y26) at (21.10122,39.96479);
\draw (X38) circle (0.356869);
\coordinate (Y38) at (48.68806,41.66722);

\draw (X8) node[below=0.07cm]  {DC};
\draw (X11) node  {ID};
\draw (X16) node  {KY};
\draw (X26) node  {NE};
\draw (X38) node[right=0.07cm]  {RI};

\fill (Y8) circle (0.25);
\fill (Y11) circle (0.25);
\fill (Y16) circle (0.25);
\fill (Y26) circle (0.25);
\fill (Y38) circle (0.25);

\draw (X1)   node {\scriptsize AL};
\draw (X2)   node {\scriptsize AZ};
\draw (X3)   node {\scriptsize AR};
\draw (X4)   node {\scriptsize CA};
\draw (X5)   node {\scriptsize CO};
\draw (X6)   node {\scriptsize CT};
\draw (X7)   node[left=0.07cm] {\scriptsize DE};
\draw (X9)   node {\scriptsize FL};
\draw (X10)  node {\scriptsize GA};
\draw (X12)  node {\scriptsize IL   };
\draw (X13)  node {\scriptsize IN  };
\draw (X14)  node {\scriptsize IA  };
\draw (X15)  node {\scriptsize KS };
\draw (X17)  node {\scriptsize LA  };
\draw (X18)  node {\scriptsize ME };
\draw (X19)  node[right=0.07cm] {\scriptsize MD };
\draw (X20)  node {\scriptsize MA};
\draw (X21)  node {\scriptsize MI   };
\draw (X22)  node {\scriptsize MN };
\draw (X23)  node {\scriptsize MS };
\draw (X24)  node {\scriptsize MO};
\draw (X25)  node {\scriptsize MT };
\draw (X27)  node {\scriptsize NV  };
\draw (X28)  node {\scriptsize NH };
\draw (X29)  node {\scriptsize NJ  };
\draw (X30)  node {\scriptsize NM};
\draw (X31)  node {\scriptsize NY  };
\draw (X32)  node {\scriptsize NC };
\draw (X33)  node {\scriptsize ND };
\draw (X34)  node {\scriptsize OH };
\draw (X35)  node {\scriptsize OK };
\draw (X36)  node {\scriptsize OR };
\draw (X37)  node {\scriptsize PA };
\draw (X39)  node {\scriptsize SC };
\draw (X40)  node {\scriptsize SD };
\draw (X41)  node {\scriptsize TN };
\draw (X42)  node {\scriptsize TX };
\draw (X43)  node {\scriptsize UT };
\draw (X44)  node[above=0.07cm] {\scriptsize VT };
\draw (X45)  node {\scriptsize VA  };
\draw (X46)  node {\scriptsize WA };
\draw (X47)  node {\scriptsize WV };
\draw (X48)  node {\scriptsize WI  };
\draw (X49)  node {\scriptsize WY  };

\draw (X1) -- (Y16);
\draw (X2) -- (Y11);
\draw (X3) -- (Y26);
\draw (X4) -- (Y11);
\draw (X5) -- (Y26);
\draw (X6) -- (Y8);
\draw (X7) -- (Y8);
\draw (X9) -- (Y16);
\draw (X10) -- (Y16);
\draw (X12) -- (Y16);
\draw (X13) -- (Y16);
\draw (X14) -- (Y26);
\draw (X15) -- (Y26);
\draw (X17) -- (Y16);
\draw (X18) -- (Y38);
\draw (X19) -- (Y8);
\draw (X20) -- (Y38);
\draw (X21) -- (Y16);
\draw (X22) -- (Y26);
\draw (X23) -- (Y16);
\draw (X24) -- (Y26);
\draw (X25) -- (Y11);
\draw (X27) -- (Y11);
\draw (X28) -- (Y38);
\draw (X29) -- (Y8);
\draw (X30) -- (Y26);
\draw (X31) -- (Y8);
\draw (X32) -- (Y8);
\draw (X33) -- (Y26);
\draw (X34) -- (Y16);
\draw (X35) -- (Y26);
\draw (X36) -- (Y11);
\draw (X37) -- (Y8);
\draw (X39) -- (Y16);
\draw (X40) -- (Y26);
\draw (X41) -- (Y16);
\draw (X42) -- (Y26);
\draw (X43) -- (Y11);
\draw (X44) -- (Y38);
\draw (X45) -- (Y8);
\draw (X46) -- (Y11);
\draw (X47) -- (Y8);
\draw (X48) -- (Y16);
\draw (X49) -- (Y11);

\draw (1,25) node {$f^* = 16.24907$};

\end{tikzpicture}}\end{center}
\caption{Solutions for $5$-center problem under Scenario SC1 for the US data set.\label{fig:solC5}}
\end{figure}


 
\begin{figure}[h]
\begin{center}
\fbox{\begin{tikzpicture}[scale=0.175]
   
\coordinate (X1) at (33.7296,32.7794);
\coordinate (X2) at (8.8981,34.2744);
\coordinate (X3) at (28.1157,34.8938);
\coordinate (X4) at (1.0887,37.1841);
\coordinate (X5) at (15.0105,38.9972);
\coordinate (X6) at (43.5436,38.9101);
\coordinate (X7) at (47.831,41.6219);
\coordinate (X8) at (45.0533,38.9896);
\coordinate (X9) at (38.1086,28.6305);
\coordinate (X10) at (37.1157,32.6415);
\coordinate (X11) at (5.9453,44.3509);
\coordinate (X12) at (31.3618,40.0417);
\coordinate (X13) at (34.2767,39.8942);
\coordinate (X14) at (27.0623,42.0751);
\coordinate (X15) at (22.1779,38.4937);
\coordinate (X16) at (35.2562,37.5347);
\coordinate (X17) at (28.5615,31.0689);
\coordinate (X18) at (51.3155,45.3695);
\coordinate (X19) at (43.7674,39.055);
\coordinate (X20) at (48.75,42.2596);
\coordinate (X21) at (35.1481,44.3467);
\coordinate (X22) at (26.253,46.2807);
\coordinate (X23) at (30.8905,32.7364);
\coordinate (X24) at (28.1003,38.3566);
\coordinate (X25) at (10.925,47.0527);
\coordinate (X26) at (20.7632,41.5378);
\coordinate (X27) at (3.9271,39.3289);
\coordinate (X28) at (48.9772,43.6805);
\coordinate (X29) at (45.8855,40.1907);
\coordinate (X30) at (14.4457,34.4071);
\coordinate (X31) at (45.0315,42.9538);
\coordinate (X32) at (41.1706,35.5557);
\coordinate (X33) at (20.0924,47.4501);
\coordinate (X34) at (37.7646,40.2862);
\coordinate (X35) at (23.064,35.5889);
\coordinate (X36) at (0,43.9336);
\coordinate (X37) at (42.7587,40.8781);
\coordinate (X38) at (49.0021,41.6762);
\coordinate (X39) at (39.6619,33.9169);
\coordinate (X40) at (20.332,44.4443);
\coordinate (X41) at (34.2078,35.858);
\coordinate (X42) at (21.2271,31.4757);
\coordinate (X43) at (8.888,39.3055);
\coordinate (X44) at (47.8925,44.0687);
\coordinate (X45) at (41.7046,37.5215);
\coordinate (X46) at (0.1111,47.3826);
\coordinate (X47) at (39.9356,38.6409);
\coordinate (X48) at (30.5642,44.6243);
\coordinate (X49) at (13.0071,42.9957);

\draw (X20) circle (0.932809);
\coordinate (Y20) at (47.95932,42.10766);
\draw (X26) circle (2.525213);
\coordinate (Y26) at (20.82979,39.86614);
\draw (X27) circle (3.019231);
\coordinate (Y27) at (5.727338,41.32962);
\draw (X41) circle (1.863987);
\coordinate (Y41) at (33.76884,37.04855);
\draw (X45) circle (1.877887);
\coordinate (Y45) at (40.79457,36.53385);

\draw (X20) node[right=0.5cm]  {MA};
\draw (X26) node  {NE};
\draw (X27) node  {NV};
\draw (X41) node  {TN};
\draw (X45) node  {VI};

\fill (Y20) circle (0.25);
\fill (Y26) circle (0.25);
\fill (Y27) circle (0.25);
\fill (Y41) circle (0.25);
\fill (Y45) circle (0.25);

\draw (X1)   node {\scriptsize AL};
\draw (X2)   node {\scriptsize AZ};
\draw (X3)   node {\scriptsize AR};
\draw (X4)   node {\scriptsize CA};
\draw (X5)   node {\scriptsize CO};
\draw (X6)   node {\scriptsize CT};
\draw (X7)   node[left=0.07cm] {\scriptsize DE};
\draw (X8)   node[below right=0.07cm] {\scriptsize DC};
\draw (X9)   node {\scriptsize FL};
\draw (X10)  node {\scriptsize GA};
\draw (X11)  node {\scriptsize ID  };
\draw (X12)  node {\scriptsize IL   };
\draw (X13)  node {\scriptsize IN  };
\draw (X14)  node {\scriptsize IA  };
\draw (X15)  node {\scriptsize KS };
\draw (X16)  node {\scriptsize KY };
\draw (X17)  node {\scriptsize LA  };
\draw (X18)  node {\scriptsize ME };
\draw (X19)  node[right=0.07cm] {\scriptsize MD };
\draw (X21)  node {\scriptsize MI   };
\draw (X22)  node {\scriptsize MN };
\draw (X23)  node {\scriptsize MS };
\draw (X24)  node {\scriptsize MO};
\draw (X25)  node {\scriptsize MT };
\draw (X28)  node {\scriptsize NH };
\draw (X29)  node {\scriptsize NJ  };
\draw (X30)  node {\scriptsize NM};
\draw (X31)  node {\scriptsize NY  };
\draw (X32)  node {\scriptsize NC };
\draw (X33)  node {\scriptsize ND };
\draw (X34)  node {\scriptsize OH };
\draw (X35)  node {\scriptsize OK };
\draw (X36)  node {\scriptsize OR };
\draw (X37)  node {\scriptsize PA };
\draw (X38)  node {\scriptsize RI  };
\draw (X39)  node {\scriptsize SC };
\draw (X40)  node {\scriptsize SD };
\draw (X42)  node {\scriptsize TX };
\draw (X43)  node {\scriptsize UT };
\draw (X44)  node[above=0.07cm] {\scriptsize VT };
\draw (X46)  node {\scriptsize WA };
\draw (X47)  node {\scriptsize WV };
\draw (X48)  node {\scriptsize WI  };
\draw (X49)  node {\scriptsize WY       };

\draw (1,25) node {$f_0 = 18.61899$};

\draw (X1) -- (Y41);
\draw (X2) -- (Y27);
\draw (X3) -- (Y41);
\draw (X4) -- (Y27);
\draw (X5) -- (Y26);
\draw (X6) -- (Y45);
\draw (X7) -- (Y20);
\draw (X8) -- (Y45);
\draw (X9) -- (Y45);
\draw (X10) -- (Y41);
\draw (X11) -- (Y27);
\draw (X12) -- (Y41);
\draw (X13) -- (Y41);
\draw (X14) -- (Y26);
\draw (X15) -- (Y26);
\draw (X16) -- (Y41);
\draw (X17) -- (Y41);
\draw (X18) -- (Y20);
\draw (X19) -- (Y45);
\draw (X21) -- (Y41);
\draw (X22) -- (Y26);
\draw (X23) -- (Y41);
\draw (X24) -- (Y41);
\draw (X25) -- (Y27);
\draw (X28) -- (Y20);
\draw (X29) -- (Y45);
\draw (X30) -- (Y26);
\draw (X31) -- (Y45);
\draw (X32) -- (Y45);
\draw (X33) -- (Y26);
\draw (X34) -- (Y41);
\draw (X35) -- (Y26);
\draw (X36) -- (Y27);
\draw (X37) -- (Y45);
\draw (X38) -- (Y20);
\draw (X39) -- (Y45);
\draw (X40) -- (Y26);
\draw (X42) -- (Y26);
\draw (X43) -- (Y27);
\draw (X44) -- (Y20);
\draw (X46) -- (Y27);
\draw (X47) -- (Y45);
\draw (X48) -- (Y41);
\draw (X49) -- (Y27);

\end{tikzpicture}}\end{center}
\caption{Initial Solutions for $5$-center problem under Scenario SC1 for the US data set.\label{fig:solC05}}
\end{figure}

\section{Conclusions}
\label{sec:conclusions}

A unified version of the classical $p$-median problem is introduced in this paper which includes as particular cases the discrete and the continuous $p$-median and $p$-center problems. The problem considers that each facility can be located not only in the exact given position but in a neighborhood around it. Also, ordered median objective functions are modeled for the problem. Several mathematical programming formulations are proposed based on formulations for the discrete ordered median problem obtained from different sources. 
 Two location-allocation approaches for solving the problem are presented. Although the optimization problems needed to solve are still NP-hard, the reduced dimensionality of them allows us to provide good quality solution in more reasonable times.

Several extensions are posible within this new framework. The first is the development of decomposition approaches for solving the OMPN. Lagrangean decomposition (relaxing the ordering constraints) combined with Benders decomposition (to \textit{separate} the discrete and the continuous decisions) may produce exact solutions in better CPU times. On the other hand,  although we analyze the ordered $p$-median problem, the results in this paper can be extended to other discrete location problems. For instance, capacitated \cite{P_ORP08} or multiperiod \cite{AFHP_CORS09,NS_chapter2015} location problems can be embedded into the neighborhoods framework. Other interesting related problem which is left for further research is the consideration of location-routing problems with neighborhoods. That problem would involve not only the discrete facility location problem with neighborhoods but also the TSP with neighborhoods, then, the combination of the methods proposed in this paper with those provided in \cite{GMS_OMS13} may be applicable to the problem. Also, the case in which the neighborhood of each facility is the union of convex sets would be an interesting next step within this framework. In particular, it would model the case in which two facilities may belong to the same neighborhood. The extended MINLP formulations for such a problem will become disjunctive MINLP for which some techniques are available in the literature. Another approach that would extend the version introduced in this paper is the one in which $k_j$ facilities are allowed to be located at the $j$-th neighborhood to allocate the demand points. In such a case, a nested multifacility $p$-median problem is considered for which more sophisticated strategies should be developed to solve even small-size instances.

\section*{Acknowledgements}

The author was partially supported by project MTM2016-74983-C2-1-R (MINECO, Spain), the research group SEJ-534 (Junta de Andaluc\'ia) and the research project PP2016-PIP06 (Universidad de Granada).


\bibliographystyle{amsplain} 

\providecommand{\bysame}{\leavevmode\hbox to3em{\hrulefill}\thinspace}
\providecommand{\MR}{\relax\ifhmode\unskip\space\fi MR }
\providecommand{\MRhref}[2]{%
  \href{http://www.ams.org/mathscinet-getitem?mr=#1}{#2}
}
\providecommand{\href}[2]{#2}

\end{document}